  \newtheorem{theorem}{Theorem}[section]
  \newtheorem{df}[theorem]{Definition}
  \newtheorem{prop}[theorem]{Proposition}
  \newtheorem{lemma}[theorem]{Lemma}
\theoremstyle{remark}
  \newtheorem{rem}[theorem]{Remark}
\newcommand{\Dj}{\hbox to 8pt{\raisebox{.4\height}{-}\hss D}}
\newcommand{\ac}{\ensuremath{\mathcal A}}
\newcommand{\lc}{\ensuremath{\mathcal L}}
\newcommand{\uc}{\ensuremath{\mathcal U}}
\newcommand{\g}{\ensuremath{\mathfrak{g}}}
\newcommand\beq{\begin{equation}}
\newcommand\eeq{\end{equation}}
\newcommand\bqa {\begin{eqnarray}}
\newcommand\eqa {\end{eqnarray}}
\newcommand{\bear}{\begin{array}}
\newcommand{\enar}{\end{array}}
\newcommand{\R}{\mathbb{R}}
\newcommand{\C}{\mathbb{C}}
\begin{document}

\begin{center}
\textsc{$L_\infty$-derivations and the argument shift method for deformation quantization algebras}

\medskip
G.Sharygin
\end{center}

\begin{abstract}
The argument shift method is a well-known method for generating commutative families of functions in Poisson algebras from central elements and a vector field, verifying a special condition with respect to the Poisson bracket. In this notice we give an analogous construction, which gives one a way to create commutative subalgebras of a deformed algebra from its center (which is as it is well known describable in the terms of the center of the Poisson algebra) and an $L_\infty$-differentiation of the algebra of Hochschild cochains, verifying some additional conditions with respect to the Poisson structure.
\end{abstract}

\bigskip
\section{Introduction: classical argument shift method}
\subsection{History and motivation}
In the study of integrable systems one naturally and inevitably encounters the question, whether there exists a sufficiently large set of first integrals of the given Hamiltonian equation. To answer it, it is often convenient to have a large collection of commutative subalgebras in the Poisson algebra of functions on the phase space. Thus, constructing and classifying such algebras is an important indispensable part of the integrable systems theory.

Among other methods of constructing commutative families of functions, the argument shift method is one of the simplest and relatively universal. It was first observed in the papers by Manakov in a particular case of Euler equation (see \cite{Manakov1976}), and later it was formulated in full generality (in the case, where the Poisson manifold is equal to the coadjoint representation of a Lie algebra equipped with the standard Poisson structure) by Mischenko and Fomenko, \cite{MiFo78}.

Since that time, the method has been the subject of minute discussions and numerous generalizations (one of which we explain in this paper). In particular, it was shown that under mild conditions the commutative algebras yielded by it are maximal (if the direction of the shift is accurately chosen) and complete; see the papers of Bolsinov, Sadetov, Zhang, Izosimov and others, \cite{Bol1,Bol2,Sad,BolZh,Izo}. 

On the other hand, according to Kontsevich (see \cite{Kon97}) and many others one can apply a quantization procedure to any Poisson manifold, thus obtaining the ``quantum observable'' algebra: the associative noncommutative algebra, linearly isomorphic to the space of (usually smooth, or polynomial) functions on the phase space (often with a formal parameter $\hbar$ added to the picture), such that the product in it is a deformation of the usual commutative product of functions, and the linear part of the deformation is given by the Poisson structure (see the discussion at the end of the section \ref{sec:secintro2} below). One can regard this algebra as the suitable domain for the investigation of the quantum mechanical  problems; similarly to the classical Hamiltonian mechanics, solving such system involves finding a suitable system of mutually commuting elements in the quantized algebra (this time the commutation is understood in the usual algebraic sense). It is natural to assume that such algebras should be somehow related with the commutative Poisson subalgebras on the same space.

This idea, however simple in seems at first sight, is pretty hard to put into practice; the search for the corresponding commutative quantum algebras involves many nontrivial constructions and derives inspiration from most variagated sources. For example, in case when the phase space is given by the dual space of a Lie algebra, this involves the study of universal enveloping algebras and their generalizations, like Yangians and quantum groups, see \cite{FeiginFrenkel,Molev1}.

In this paper we are discussing a possible way to construct quantum counterpart of commutative algebras, yielded by the argument shift method. It turns out that in spite of being very algebraic in its nature, and allowing numerous interpretations on classical level, this method does not allow a straightforward interpretation in quantum case. The known constructions, which give ``quantum integrable systems'', related to this method, involve hard results about the structure of universal enveloping algebras, Yangians, or the properties of the universal enveloping algebras of affine Lie algebras at the critical level, see \cite{Molev2,Molev3,Tarasov,Rybnikov,Talalaev}.

In this paper we suggest an algebraic construction, which generalizes the argument shift method to the deformed algebra. It is based on the theory of $L_\infty$-algebras and $L_\infty$-morphisms. One can say, that it allows us to obtain an analogue of the procedure, that generates the commutative subalgebras, rather than constructs the commutative subalgebras in the quantized algebra directly from the algebras, generated by the shift at the classical level. A considerable drawback of this construction is that to this moment I have no example, where this procedure actually works, the conditions, that should be satisfied for it to be defined, being too hard to observe. I hope to amend this in the papers to come.

 The remaining part of this paper is organised as follows: in section \ref{sec:secintro2} we recall the classical version of the argument shift method, which we phrase in a greater generality than it is usually done: it turns out, that the method is based on a purely algebraic consideration and does not depend on the actual geometric nature of the ingredients, it involves. In section 2 we recall the definitions and basic properties of $L_\infty$ algebras and $L_\infty$-morphisms. We also recall the role they play in Kontsevich's deformation quantization construction. Then in section 3 we define $L_\infty$ derivations of a DG Lie algebra and give the definition of weak Nijenhuis property. We then use this notion in the particular case of the Lie algebra of local Hochschild cochains (polydifferential operators) to show how one can get the first nontrivial commutation relation, analogous to $\{\xi(f),\xi(g)\}=0$ in the proposition \ref{prop:ashiftcl}. In order to move forward we need to replace the weak Nijenhuis condition by the strong Nijenhuis condition, which we do in section 4; then we show (theorem \ref{theo:om}), how in this case the $L_\infty$-derivation gives rise to the analogue of the argument shift method. Finally, in the last section we make few remarks on the possible further developments and applications of our ideas.
 
\medskip
\noindent\textbf{Acknowledgements.} During the work on this paper, the author was supported by the 
Russian Science Foundartion grant 16-11-10069, RFFI grant 18-01-0398 and the Simons foundation. The major part of the paper was written during the visit to PKU; the author expresses his deepest gratitude to this university for wonderful working conditions.

\subsection{The classical construction}\label{sec:secintro2}
In what follows we let $A$ be a Poisson algebra, i.e. a commutative unital algebra over a ground field $\Bbbk,\,char(\Bbbk)=0$ (usually $\Bbbk=\R$ or $\C$) with a bracket $\{,\}:A\otimes A\to A$, verifying the following set of relations:
\begin{description}
\item[{\rm(\textit{i})}] $\{,\}$ is bilinear over $\Bbbk$;
\item[{\rm(\textit{ii})}] $\{,\}$ is antisymmetric: $\{f,g\}=-\{g,f\}$;
\item[{\rm(\textit{iii})}] $\{,\}$ verifies the Leibniz rule:
\[
\{f,gh\}=\{f,g\}h+\{f,h\}g.
\]
\item[{\rm(\textit{iv})}] $\{,\}$ verifies the Jacobi identity:
\[
\{f,\{g,h\}\}+\{g,\{h,f\}\}+\{h,\{f,g\}\}=0.
\]
\end{description}
 Center $Z_{\pi}(A)$ of Poisson algebra $A$ is the subalgebra in $A$, spanned by the elements $x\in A$ such that $\{x,y\}=0$ for all $y\in A$; elements of $Z_\pi(A)\setminus\Bbbk$ are sometimes called \textit{Casimir elements} or simply \textit{Casimirs}.

If $A=C^\infty(M)$ is the algebra of smooth functions on a manifold, then conditions (\textit{i})-(\textit{iii}) mean that the bracket $\{,\}$ is determined by a bivector field $\pi\in\wedge^2TM$; in this case, as it is well-known (see e.g. \cite{Ciccoli}) the condition (\textit{iv}) is equivalent to the equation 
\begin{equation}\label{eq:Poi1}
[\pi,\pi]=0.
\end{equation}
Here $[,]$ is the Nijenhuis-Schouten bracket of polyvector fields. In this situation the bracket $\{f,g\}$ of any two functions is given by the formula:
\[
\{f,g\}=\pi(df,dg).
\]
One calls the bracket $\{,\}$ \textit{Poisson bracket} and the bivector $\pi$ verifying \eqref{eq:Poi1} \textit{Poisson bivector}. 

An important particular case is when $\pi$ has maximal rank; in this case its inverse differential $2$-form $\omega=\pi^{-1}$ satisfies the equation $d\omega=0$ and one calls it a \textit{symplectic structure} on $M$. Remark, that in symplectic case there are no Casimirs in $C^\infty(M)$, i.e. $Z_\pi(A)=\C$ or $\R$ in this case.

Let $\xi$  be a vector field on $M$; recall that one says that $\xi$ is \textit{Poisson} field, if the Lie derivative of $\pi$ with respect to it vanishes; this condition is equivalent to the equation
\begin{equation}\label{eq:Poivf}
\xi(\{f,g\})=\{\xi(f),g\}+\{f,\xi(g)\}.
\end{equation}
An important particular case of Poisson fields are the fields of the form $X_f=\pi^\sharp(df)$; here $\pi^\sharp$ is the ``index raising'' operator induced by $\pi$; in coordinates:
\[
\pi^\sharp(\alpha)^k=\pi^{kl}\alpha_l
\]
for any $1$-form $\alpha$. The fields $X_f$ are called \textit{Hamiltonian}, they are characterized by the equation $X_f(g)=-\{f,g\}$; the equality $\lc_{X_f}\pi=0$ now follows from the Jacobi identity.

Another important class of vector fields consists of \textit{Nijehuis} fields: \textit{we shall say, that a field $\xi$ is Nijenhuis, if the second Lie derivative with respect to $\xi$ kills the Poisson bivector $\pi$, i.e.
\begin{equation}\label{eq:defNj}
\lc_\xi^2\pi=0.
\end{equation}
}
A good example of Nijenhuis field is any constant (in linear coordinates) field on a vector space, in case $\pi$ is linear; for example, one can take $M=\g^*$ with the standard Poisson-Lie structure.

The purpose of considering Nijenhuis fields follows from the next observation (here $A(M)$ is the Poisson algebra of smooth functions on a manifold with respect to a bivector $\pi$):
\begin{prop}[The argument shift method]\label{prop:ashiftcl}
For any $f,\,g\in Z_\pi(A(M))$, any Nijenhuis vector field $\xi$ and any natural $k,\,l$ the following relation holds: $\{\lc_\xi^kf,\lc_\xi^lg\}=0$.
\end{prop}
Observe, that if $\xi$ is in fact Poisson field, then $\lc_\xi^kf\in Z_\pi(A(M))$ for all $k\in\mathbb N,\,f\in Z_\pi(A(M))$, so the statement holds trivially; however, in the case of a ``genuine'' Nijenhuis field, i.e. if $\lc_\xi\pi\neq0,\,\lc_\xi^2\pi=0$, the functions $\lc_\xi^kf$ are no more central. Let us now sketch the proof of this proposition:
\begin{proof}
We shall prove by induction in $N\ge0$ that $\lc^k_\xi\pi(d\lc_\xi^lf,d\lc_\xi^mg)=0$ for all $k+l+m=N$ and all $f,g\in Z_\pi(A(M))$ (this statement is a bit more than what the proposition needs, since $\pi(d\lc_\xi^kf,d\lc_\xi^lg)=\{\lc_\xi^kf,\lc_\xi^lg\}$). The base of this statement ($N=0$) is trivially true, since $f,g$ are central. 

Next, assuming that this statement holds for all small $N$, and differentiating the corresponding equations by $\xi$ for all $k,\,l$ and $m$ we obtain the following system of linear equations on the values of $\lc_\xi^k\pi(d\lc_\xi^lf,d\lc_\xi^mg)$:
\[
\lc^{k+1}_\xi\pi(d\lc_\xi^lf,d\lc_\xi^mg)+\lc^k_\xi\pi(d\lc_\xi^{l+1}f,d\lc_\xi^mg)+\lc^k_\xi\pi(d\lc_\xi^lf,d\lc_\xi^{m+1}g)=0,\ \forall k+l+m=N.
\]
Observe next that due to the conditions on $\xi$ and $\pi$ only $k=0$ and $k=1$ can appear, so this system is reduced to
\[
\begin{cases}
&\lc_\xi\pi(d\lc_\xi^lf,d\lc_\xi^mg)+\pi(d\lc_\xi^{l+1}f,d\lc_\xi^mg)+\pi(d\lc_\xi^lf,d\lc_\xi^{m+1}g)=0,\ l+m=N-1\\
&\lc_\xi\pi(d\lc_\xi^{l+1}f,d\lc_\xi^mg)+\lc_\xi\pi(d\lc_\xi^lf,d\lc_\xi^{m+1}g)=0,\ l+m=N-1.
\end{cases}
\]
We are going to show that this system under the conditions of our proposition can have only trivial solutions. To this end consider the second equation: varying $l$ from $0$ to $N-1$, we get inductively:
\[
\lc_\xi\pi(d\lc_\xi^{N-1},dg)=-\lc_\xi\pi(d\lc_\xi^{N-2}f,d\lc_\xi g)=\lc_\xi\pi(d\lc_\xi^{N-3}f,d\lc_\xi^2g)=\dots=(-1)^{N-1}\lc_\xi\pi(df,\lc_\xi^{N-1}g),
\]
i.e. $\lc_\xi\pi(d\lc_\xi^{N-k-1}f,d\lc_\xi^kg)=(-1)^k\lc_\xi\pi(d\lc_\xi^{N-1},dg)$.

Now we turn to the first series of equations: if $m=0$, it is reduced to
\[
\lc_\xi\pi(d\lc_\xi^{N-1}f,dg)+\pi(d\lc_\xi^{N-1}f,d\lc_\xi g)=0,
\]
since $g$ is central. So 
\[
\pi(d\lc_\xi^{N-1}f,d\lc_\xi g)=-\lc_\xi\pi(d\lc_\xi^{N-1}f,dg)=\lc_\xi\pi(d\lc_\xi^{N-2}f,d\lc_\xi g).
\]
Now from the the equation of the first series with $m=1$ we get:
\[
\lc_\xi\pi(d\lc_\xi^{N-2}f,d\lc_\xi g)+\pi(d\lc_\xi^{N-1}f,d\lc_\xi g)+\pi(d\lc_\xi^{N-2}f,d\lc_\xi^{2}g)=0,
\]
or 
\[
\pi(d\lc_\xi^{N-2}f,d\lc_\xi^{2}g)=-2\lc_\xi\pi(d\lc_\xi^{N-2}f,d\lc_\xi g)=2\lc_\xi\pi(d\lc_\xi^{N-1}f,dg)=2\lc_\xi\pi(d\lc_\xi^{N-3}f,d\lc_\xi^2g).
\]
We plug this into the equation for $m=2$ in the first series:
\[
\lc_\xi\pi(d\lc_\xi^{N-3}f,d\lc_\xi^2g)+\pi(d\lc_\xi^{N-2}f,d\lc_\xi^2g)+\pi(d\lc_\xi^{N-3}f,d\lc_\xi^{3}g)=0
\]
we get
\[
\pi(d\lc_\xi^{N-3}f,d\lc_\xi^{3}g)=-3\lc_\xi\pi(d\lc_\xi^{N-1},dg).
\]
By induction we get
\[
\pi(d\lc_\xi^{N-k}f,d\lc_\xi^{k}g)=(-1)^kk\lc_\xi\pi(d\lc_\xi^{N-1},dg).
\]
But when $k=N$ we see that $\pi(df,d\lc_\xi^Ng)=0$, since $f$ is central. So $\lc_\xi\pi(d\lc_\xi^{N-1},dg)=0$ and all the other expressions vanish automatically.
\end{proof}
\begin{rem}
In fact, this statement and its proof we gave here are applicable to binary operations of any kind $m:V\otimes V\to V$ on any vector space $V$ and any linear operator $\xi:V\to V$; we just put:
\[
\xi(m)(a,b)=\xi(m(a,b))-m(\xi(a),b)-m(a,\xi(b)),\ a,b\in V
\] 
for any two operators of this sort. Then \textit{if $\xi^2(m)=\xi(\xi(m))=0$, and 
\[
a,b\in Z_m(V)=\{x\in V\mid m(x,y)=0=m(y,x),\,\forall y\in V\}
\]
then for all $k,\,l\in\mathbb N_0$ we have $m(\xi^k(a),\xi^l(b))=0$}.
\end{rem}

As one knows from Kontsevich's theorem, for every Poisson structure $\pi$ on a manifold $M$ there exists an associative star-product $\star=\star_\pi$ on the space of formal power series with coefficiewnts in functions on $M$:
\begin{equation}\label{eq:star1}
f\star g=fg+\frac{\hbar}{2}\{f,g\}+\sum_{k=2}^\infty\hbar^kB_k(f,g),
\end{equation}
where $B_k(-,-)$ are suitable bidifferential operators; to make our notation more uniform we shall usually replace the term $\frac12\{f,g\}$ in this formula by $B_1(f,g)$. Below we shall give a brief description of Kontsevich's construction, proving the existence of this formula. It is known, that many properties of the original algebra of functions on $M$ can be easily transferred to the algebra $\ac(M)=(C^\infty(M)[[\hbar]],\star)$, called the deformation quantization of $M$. In particular, one can show that for every element in $Z_\pi(A(M))$ there will be a well-defined element $\tilde f\in Z(\ac(M))$ (where on the right we denote by $Z(\ac(M))$ the center of $\ac(M)$, i.e. the subalgebra of elements, commuting with every other element in $\ac(M)$). 

One can ask, \textit{if the argument shift method can be transferred to the quantized algebra as well}? The proof of proposition \ref{prop:ashiftcl} being purely algebraic and suitable for any type of binary operation, this conjecture seems quite plausible. However the answer to this question still remains unknown, although in some important particular cases it is known to be positive. For example, this is the case when $M=\g^*$ for a semisimple Lie algebra $\g$, but the construction of commutative families in that situation is quite complicated and is based on the subtle properties of affine Lie agebras and Yangians, i.e. on the study of infinite-dimensional Lie theory.

 The purpose of the remaining part of this paper is to describe a construction analogous to the argument shift method in which the notion of Nijenhuis vector field is replaced with a suitable construction from $L_\infty$ algebras. We hope, that this notion being somewhat less restrictive than the usual Nijenhuis condition \eqref{eq:defNj}, it will be possible to find instances of this structure in a wider set of examples (and hence to find large commutative subalgebras in quantized algebras).
 
\section{$L_\infty$ structures and morphisms}
\subsection{$L_\infty$-algebras}
We begin with the classical definition:
\begin{df}\label{def:linf}
One says that a graded space $V=\bigoplus_{n\ge0} V_n$ is given the structure of $L_\infty$-algebra, if its graded exterior algebra $\Lambda_V=\Lambda^*(V[1])$ is equipped with operator $D:\Lambda_V\to\Lambda_V$ of degree $1$ such that
\begin{description}
\item[{\rm(\textit{i})}] $D^2=0$;
\item[{\rm(\textit{ii})}] $D$ is differentiation with respect to the free cocommutative coalgebra structure $\Delta_V$ on $\Lambda_V$, i.e. $\Delta_V:\Lambda_V\to\Lambda_V\otimes\Lambda_V$ and
\[
\Delta_VD_V=(D_V\otimes 1+1\otimes D_V)\Delta_V.
\]
\end{description}
\end{df}
It follows from condition (\textit{ii}) that the map $D$ is determined by its ``Taylor series'' coefficients: $D=\{D_n\}_{n\ge 1}$ where $D_n:\Lambda^n(V[1])\to V[1]$ is a degree $1$ map (or, if we restore the original grading $D_n:\Lambda^n(V)\to V$ has degree $2-n$); now we can write the condition (\textit{i})  ``in coordinates'', i.e. in terms of $D_n$. For instance (we use the same symbols for homogeneous elements of $V$ and for their degrees, i.e. $a\in V_a,\,b\in V_b$ etc.):
\[
\begin{aligned}
&D_1(D_1(v))=0,\\
&D_1(D_2(a,b))-D_2(D_1(a),b)-(-1)^{a}D_2(a,D_1(b))=0.
\end{aligned}
\]
Thus, the operator $D_1$ plays the role of differential in $V$ and is usually denoted by $d$; similarly $D_2$ determines a skew symmetric binary operation $[,]$ on $V$ of degree $0$, commuting with $d$. now the relation for $D_3$ in this notation can be written as
\[
(dD_3)(a,b,c)=[[a,b],c]+(-1)^{a(b+c)}[[b,c],a]+(-1)^{b(a+c)}[[c,a],b],
\]
where $dD_3$ is the differential of a homogeneous map: $df=d\circ f-(-1)^ff\circ d$. In other words, $[,]$ verifies the (graded) Jacobi identity up to a homotopy. All the other relations are generalized Jacobi identities; they have the form:
\[
(dD_{n})(a_1,\dots,a_{n})=\sum_{i=2}^n\sum_{\sigma\in UnSh(i,n-i)}(-1)^{\epsilon(\sigma,a_1,\dots,a_n)}D_{n-i+1}(D_i(a_{\sigma(1)},\dots,a_{\sigma(i)}),a_{\sigma(i+1)},\dots,a_{\sigma(n)}).
\]
Here $UnSh(i,j)$ is the set of all \textit{unshuffles} of size $(i,j)$, i.e. of all permutations, whose inverse maps are $(i,j)$-shuffles (recall, that $\sigma\in S_{i+j}$ is called $(i,j)$-shuffle, if $\sigma(1)<\sigma(2)<\dots<\sigma(i),\,\sigma(i+1)<\sigma(i+2)<\dots<\sigma(i+j)$) and sign $\epsilon(\sigma,a_1,\dots,a_n)$ is determined by the Koszul rules (below we shall usually abbreviate this notation just to $\epsilon$ or $\epsilon(\sigma)$).

From these formulas it is easy to see that the usual DG Lie algebra structure on a space $\mathfrak g$ determines a particular kind of $L_\infty$-structure: we just put $D_1=d,\,D_2=[,]$ (the usual differential and the Lie bracket on $\mathfrak g$) and put $D_3=D_4=\dots=0$. In fact, $\Lambda_\mathfrak g$ with structure map $D$ in this case is just the Chevalley-Eilenberg complex with its standard differential.

\subsection{Morphisms and homotopies in $L_\infty$ category}
Given two $L_\infty$-algebras $V,\,W$ one can ask, what is the proper notion of morphisms between them. This question is answered by the following definition:
\begin{df}\label{def:linfmo}
A degree $0$ map $F:\Lambda_V\to\Lambda_W$ for any two $L_\infty$-algebras $V,\,W$ is called an $L_\infty$-morphism, if
\begin{description}
\item[{\rm(\textit{i})}] $D_W\circ F=F\circ D_V$;
\item[{\rm(\textit{ii})}] $F$ is a homomorphism with respect to the free cocommutative coalgebra structures on $\Lambda_V,\,\Lambda_W$.
\end{description} 
\end{df}
As before this condition can be expressed in terms of ``Taylor coefficients'': we represent $F$ as a collection of maps $F_n:\Lambda^n(V)\to W$ of degree $1-n$, then the condition (\textit{ii}) from definition \ref{def:linfmo} turns into the following equalities:
\[
\begin{aligned}
&F_1d_V=d_WF_1,\\
&F_1([a,b])-[F_1(a),F_1(b)]=d_W(F_2(a,b))+F_2(d_Va,b)+(-1)^aF_2(a,d_Vb),
\end{aligned}
\]
and so on; for arbitrary $n$ this turns into the following somewhat cumbersome equation
\begin{equation}\label{eq:Linfmor}
\begin{aligned}
&\sum_{i=1}^n\sum_\sigma (-1)^{\epsilon_1}F_{n-i+1}(D^V_i(a_{\sigma(1)},\dots,a_{\sigma(i)}),a_{\sigma(i+1)},\dots,a_{\sigma(n)})\\
                     &=\sum_{k=1}^n\frac{1}{k!}\sum_{i_1+\dots+i_k=n}\sum_{\tau}(-1)^{\epsilon_2}D^W_k(F_{i_1}(a_{\tau(1)},\dots,a_{\tau(i_1)}),\dots,F_{i_k}(a_{\tau(i_1+\dots+i_{k-1}+1)},\dots,a_{\tau(n)})).
\end{aligned}
\end{equation}
Here, as before, the second sum on the left is taken over all $(i,n-i)$ unshuffles, while the last sum on the right is over $(i_1,i_2,,\dots,i_k)$ unshuffles and the signs $\epsilon_1,\,\epsilon_2$ are chosen in accordance with the Koszul rules. Since the morphisms of $L_\infty$-algebras are determined by these ``Taylor coefficients'', below we shall sometimes write $F=\{F_n\}:V\to W$.

As before, the usual homomorphism of DG Lie algebras is an example of $L_\infty$-morphism: we set $F_k=0$ for $k\ge2$. However, even in case of Lie algebras one can consider ``genuine'' $L_\infty$ maps, i.e. maps which do not coincide with the usual Lie-algebraic morphisms. In fact, equation \eqref{eq:Linfmor} in this case turns (with previous notation) into:
\begin{equation}\label{eq:Linfmog}
\begin{aligned}
&(dF_n)(a_1,\dots,a_n)-\sum_{i<j}(-1)^{\epsilon(i,j)}F_{n-1}([a_i,a_j]_V,a_1,\dots,\widehat{a_i},\dots,\widehat{a_j},\dots,a_n)\\
&=\sum_{i+j=n}\frac12\sum_{\sigma}(-1)^{\epsilon(\sigma)}[F_i(a_{\sigma(1)},\dots,a_{\sigma(i)}),F_j(a_{\sigma(i+1)},\dots,a_{\sigma(n)})]_W.
\end{aligned}
\end{equation}
Finally, we give the following definition of homotopy between two $L_\infty$-morphisms:
\begin{df}\label{def:linfho}
Two $L_\infty$-maps $F,\,G:\Lambda_V\to\Lambda_W$ are called homotopic, if there exist a map $H:\Lambda_V\to\Lambda_W$ of degree $-1$, such that
\begin{description}
\item[{\rm(\textit{i})}] $F-G=D_WH+HD_V$;
\item[{\rm(\textit{ii})}] $H$ is a derivation with respect to the free cocommutative coalgebra structures on $\Lambda_V,\,\Lambda_W$, i.e.
\[
\Delta_W H=(H\otimes 1+1\otimes H)\Delta_V.
\]
\end{description}
\end{df}
Once again, due to the condition (\textit{ii}) of this definition, the first condition can be expressed in terms of the ``Taylor coefficients'' of $H$, i.e. $H=\{H_n\}_{n\ge1}$, where $H_n:\Lambda^nV\to W$ is a map of degree $-n$. Then the equation from (\textit{i}) takes the form of the following sequence of equalities:
\[
\begin{aligned}
F_1(a)-G_1(a)&=d_WH_1(a)+H_1d_V(a),\\
F_2(a,b)-G_2(a,b)&=dH_2(a,b)+\left(H_1([a,b]_V)-[H_1(a),b]_W-(-1)^{ab}[H_1(b),a]_W\right)
\end{aligned}
\]
and so on. In particular the map $H_1$ is just a chain homotopy between $F$ and $G$. The general formula looks rather intimidating:
\begin{equation}\label{eq:Linfhom}
\begin{aligned}
F_n&(a_1,\dots,a_n)-G_n(a_1,\dots,a_n)\\
      &=\sum_{i+j=n}\sum_{\sigma\in UnSh(i,j)}(-1)^{\epsilon(\sigma)}\Bigl(D_{W,j+1}(H_i(a_{\sigma(1)},\dots,a_{\sigma(i)}),a_{\sigma(i+1)},\dots,a_{\sigma(n)})\\
      &\quad\qquad\qquad\qquad\qquad\qquad+H_{j+1}(D_{V,i}(a_{\sigma(1)},\dots,a_{\sigma(i)}),a_{\sigma(i+1)},\dots,a_{\sigma(n)})\Bigr)
\end{aligned}
\end{equation}
In what follows we shall deal with $L_\infty$-algebras, corresponding to DG Lie algebras; in this case equation \eqref{eq:Linfhom} will look as follows:
\begin{equation}\label{eq:Linfhodg}
\begin{aligned}
F_n&(a_1,\dots,a_n)-G_n(a_1,\dots,a_n)\\
      &=dH_n(a_1,\dots,a_n)+\sum_{i=1}^n(-1)^{\epsilon(i)}[H_{n-1}(a_1,\dots,\widehat{a_i},\dots,a_n),a_i]_W\\
    &\qquad\qquad\qquad\qquad+\sum_{1\le i<j\le n}(-1)^{\epsilon(i,j)}H_{n-1}([a_i,a_j]_V,a_1,\dots,\widehat{a_i},\dots,\widehat{a_j},\dots,a_n).
\end{aligned}
\end{equation}
One of the main advantages of $L_\infty$-algebras and morphisms is the fact that in this context homotopy equivalence is equivalent to the equivalence in $L_\infty$ sense. 

More accurately, recall that \textit{quasi-isomorphism} of DG Lie algebras is a homomorphism $f:\mathfrak g\to\mathfrak h$, which induces isomorphism on the level of homology (observe that it is enough to speak about the isomorphism on the level of vector spaces in this case). It turns out that not every quasi-isomorphism of DG Lie algebras $f$ has homotopy inverse, i.e. not for every $f$ one can find a homomorphism $g:\mathfrak h\to\mathfrak g$, whose composditions with $f$ are homotopic to identity. But this is not so in the wider category of $L_\infty$-algebras.

Namely, one calls an $L_\infty$-morphism $F:\Lambda_V\to \Lambda_W$ \textit{quasi-isomorphism} if its first ``Taylor coefficient'' $F_1:V\to W$ induces isomorphism in cohomology. Then the following statement is true (see \cite{Kon97}):
\begin{prop}\label{prop:Linfqua}
Every quasi-isomorphism $F:\Lambda_V\to\Lambda_W$ of $L_\infty$-algebras is homotopy-invertible, i.e. there exists an $L_\infty$-morphism $G:\Lambda_W\to\Lambda_V$ such that $F\circ G$ and $G\circ F$ are homotopic to identity.
\end{prop}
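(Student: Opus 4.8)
The plan is to construct the homotopy inverse $G$ by building its Taylor coefficients $\{G_n\}$ inductively, and then to verify that the two composites are homotopic to the identity. The starting point is linear algebra at the level of cohomology: since $F_1 : V \to W$ is a quasi-isomorphism of complexes, standard homological algebra (e.g. choosing splittings of the inclusions $\ker d \hookrightarrow V$ and of the projections onto cohomology) produces a chain map $G_1 : W \to V$ that induces the inverse isomorphism on cohomology, together with chain homotopies witnessing $F_1 G_1 \simeq \id_W$ and $G_1 F_1 \simeq \id_V$. This $G_1$ will serve as the first coefficient of the sought morphism $G$, and the first coefficient of the homotopies $H$ in Definition \ref{def:linfho}.

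Next I would set up the inductive construction of $G_n$ for $n \ge 2$. The $L_\infty$-morphism equation \eqref{eq:Linfmor} for $G$, read with $n$ inputs, expresses $d_V\bigl(G_n(w_1,\dots,w_n)\bigr)$ plus a term linear in $G_n$ of the form $G_1(\cdots)$ as a universal polynomial $P_n$ in the lower coefficients $G_1,\dots,G_{n-1}$ and the structure maps $D^V, D^W$. The key point is that, granting the relations already established for $G_1,\dots,G_{n-1}$, this expression $P_n$ is a $d$-cocycle in the relevant Hom-complex; this is a formal consequence of $D_W^2 = D_V^2 = 0$ and is the usual obstruction-vanishing computation. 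Because $G_1$ induces an isomorphism on cohomology, every such cocycle becomes a coboundary after applying $G_1$, so one can solve for $G_n$ using the chosen splittings/homotopies from the first paragraph. Iterating yields a well-defined $L_\infty$-morphism $G = \{G_n\}$ whose first coefficient is the chosen $G_1$.

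Finally I would verify that $F \circ G$ and $G \circ F$ are homotopic to the identity in the sense of Definition \ref{def:linfho}. Here the first coefficients of $F \circ G$ and $G\circ F$ are $F_1 G_1$ and $G_1 F_1$, which are chain-homotopic to the identity by construction, giving $H_1$. One then extends $H = \{H_n\}$ inductively by exactly the same mechanism: equation \eqref{eq:Linfhom} (or its DG-Lie specialization \eqref{eq:Linfhodg}) for the pair $(F\circ G,\ \id)$ presents each obstruction to solving for $H_n$ as a $d$-cocycle built from lower $H_i$, which is killed in cohomology because $H_1$ is already a homotopy equivalence, and hence can be lifted to an actual $H_n$.

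The main obstacle, and the step deserving the most care, is the inductive solvability in the second and third paragraphs: one must check that at each stage the accumulated lower-order data really does assemble into a \emph{cocycle}, so that the cohomological triviality supplied by $G_1$ (respectively $H_1$) can be invoked to produce the next coefficient. This cocycle property is not automatic; it relies on substituting the previously constructed relations back into the defining equations and using $D^2=0$ to cancel the non-exact pieces, and the bookkeeping of Koszul signs across the unshuffle sums in \eqref{eq:Linfmor} and \eqref{eq:Linfhom} is where the genuine work lies. Everything else—choosing the splittings, writing down $P_n$, lifting coboundaries—is routine once this coherence is in place.
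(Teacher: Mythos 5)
The paper offers no proof of Proposition \ref{prop:Linfqua}: it simply cites \cite{Kon97}. So your proposal must be judged on its own, and it has a genuine gap at the one step that carries all the weight. At stage $n$ the morphism equation \eqref{eq:Linfmor} for the unknown $G_n$ has the form $\partial(G_n)=\Phi_n(G_1,\dots,G_{n-1})$, where $\partial$ is the differential of the complex $\Hom(\Lambda^nW,V)$, $\partial\phi=d_V\circ\phi\pm\phi\circ d_{\Lambda^nW}$. That $\Phi_n$ is a $\partial$-cocycle is indeed the formal consequence of $D^2=0$ you describe --- but that is the routine part. What you actually need is that $\Phi_n$ is a $\partial$-\emph{coboundary}, and the cohomology of $\Hom(\Lambda^nW,V)$ is $\Hom(\Lambda^nH(W),H(V))$, which is far from zero: being a cocycle does not make $\Phi_n$ exact. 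Your justification, ``because $G_1$ induces an isomorphism on cohomology, every such cocycle becomes a coboundary after applying $G_1$,'' does not amount to an argument --- pre- or post-composition with a quasi-isomorphism induces an \emph{isomorphism} on the cohomology of these Hom-complexes, so it cannot kill a nonzero obstruction class. One must prove that the specific class $[\Phi_n]\in\Hom(\Lambda^nH(W),H(V))$ vanishes; already for $n=2$ this is the (true, but needing proof) statement that $G_{1*}=F_{1*}^{-1}$ respects the induced brackets on cohomology, and for higher $n$ it requires a genuine inductive comparison with $F$. The same unproved vanishing recurs in your construction of the homotopies $H_n$. You flag the cocycle property as ``where the genuine work lies,'' but the real difficulty is the exactness, which your outline assumes.

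The standard way to close this gap --- essentially Kontsevich's --- sidesteps the obstruction analysis. Over a field of characteristic zero choose splittings $V=H(V)\oplus dV\oplus C$ and likewise for $W$; this decomposes each $L_\infty$-algebra, up to $L_\infty$-isomorphism, into a direct sum of a \emph{minimal} one (trivial differential, underlying space the cohomology) and a linearly contractible one, with the inclusion of and projection onto the minimal part being mutually homotopy-inverse quasi-isomorphisms. A quasi-isomorphism between minimal $L_\infty$-algebras has invertible first Taylor coefficient, and any morphism of cofree coconnected coalgebras with invertible linear part is invertible as a coalgebra map, the inverse automatically commuting with the codifferentials; conjugating this strict inverse by the inclusions and projections yields $G$ and the homotopies. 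Your induction can also be completed, but only after the vanishing of each obstruction class is actually established; as written, that is precisely what is missing.
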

In particular, by the virtue of this statement and due to the observations made in the previous paragraph every quasi-isomorphism of DG Lie algebras obtains its inverse in the framework of $L_\infty$ category.

\subsection{Quasi-isomorphisms and $\star$-products}\label{sec:MCel}
The role of the $L_\infty$ algebras and morphisms in deformation theory is based on the following observations: let a star-product \eqref{eq:star1} be given. Then the formal sum $B=\sum_{n\ge1}\hbar^n B_n$ determines a $2$-cochain in the ($\hbar$-linear) Hochschild complex of $A[[\hbar]]$; the associativity condition then is transcribed as the formula:
\begin{equation}\label{eq:MC1}
\delta(B)+\frac12[B,B]=0
\end{equation}
where $\delta$ is the Hochschild cohomology differential and $[,]$ is the Gerstenhaber bracket, which introduces the DG Lie algebra structure on the Hochschild cohomology complex with shifted dimension (in fact, Gerstenhaber bracket is a morphism of degree $-1$, so in order to turn the Hochschild complex with $[,]$ into a DG Lie algebra we must shift all dimensions by $1$).

The equation \eqref{eq:MC1} exists in the case of an arbitrary DG Lie algebra $\mathfrak g$: just replace $\delta$ with proper differential in $\mathfrak g$ and use the given Lie bracket. This equation is called \textit{Maurer-Cartan equation} and the elements $\Pi$ of degree $1$ in $\mathfrak g$ are called \textit{Maurer-Cartan elements}. It turns out that the Maurer-Cartan elements behave very well with respect to the $L_\infty$-morphisms between the DG Lie algebras. Namely:
\begin{prop}\label{prop:MC1}
Let $\Pi$ be a Maurer-Cartan element in $\mathfrak g$ and let $F:\Lambda_\mathfrak g\to \Lambda_\mathfrak h$ be an $L_\infty$-morphism between these algebras. Than the following formula (assuming the convergence of the series in the right) determines a Maurer-Cartan element in $\mathfrak h$:
\begin{equation}\label{eq:MC2}
F(\Pi)=\sum_{n\ge1}\frac{1}{n!}F_n(\Pi,\dots,\Pi).
\end{equation}
\end{prop}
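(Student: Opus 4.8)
The plan is to recast the Maurer--Cartan equation in coalgebraic terms, where the $L_\infty$-morphism condition becomes transparent. To the degree $1$ element $\Pi\in\mathfrak g$ (degree $0$ after the shift to $V[1]$) I associate the group-like element
\[
e^\Pi=\sum_{m\ge0}\frac{1}{m!}\,\Pi^{\wedge m}\in\Lambda_{\mathfrak g},
\]
whose convergence I assume throughout, exactly as in the statement. The first step is the following computation: since $D_{\mathfrak g}$ is a coderivation with Taylor coefficients $D_k$, applying it to $e^\Pi$ and collecting the binomial factors should yield
\[
D_{\mathfrak g}(e^\Pi)=\Big(\sum_{k\ge1}\tfrac{1}{k!}D_k(\Pi,\dots,\Pi)\Big)\wedge e^\Pi.
\]
For a DG Lie algebra only $D_1=d$ and $D_2=[,]$ survive, so the parenthesised factor is precisely $d\Pi+\frac12[\Pi,\Pi]$. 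Hence the Maurer--Cartan equation \eqref{eq:MC1} is equivalent to the single clean identity $D_{\mathfrak g}(e^\Pi)=0$.

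The second step uses that $F$ is, by part (\textit{ii}) of definition \ref{def:linfmo}, a homomorphism of the cocommutative coalgebra structures, and therefore carries group-like elements to group-like elements. The same exponential bookkeeping as above --- this time for the coalgebra map $F$ rather than for the coderivation $D$ --- should give $F(e^\Pi)=e^{F(\Pi)}$, where $F(\Pi)$ is exactly the sum \eqref{eq:MC2}; this is where the factorials $\frac{1}{n!}$ in \eqref{eq:MC2} come from. Now I invoke the defining property (\textit{i}) of an $L_\infty$-morphism, $D_{\mathfrak h}\circ F=F\circ D_{\mathfrak g}$, and apply both sides to $e^\Pi$. Using $D_{\mathfrak g}(e^\Pi)=0$ from Step 1, the right side vanishes, while the left side equals $D_{\mathfrak h}(e^{F(\Pi)})=\big(d_{\mathfrak h}F(\Pi)+\tfrac12[F(\Pi),F(\Pi)]_{\mathfrak h}\big)\wedge e^{F(\Pi)}$ by the Step 1 formula applied in $\mathfrak h$.

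Finally, I extract the conclusion by projecting the resulting identity
\[
\big(d_{\mathfrak h}F(\Pi)+\tfrac12[F(\Pi),F(\Pi)]_{\mathfrak h}\big)\wedge e^{F(\Pi)}=0
\]
onto the $\Lambda^1(\mathfrak h[1])=\mathfrak h[1]$ summand; since $e^{F(\Pi)}$ has constant term $1$, this leaves exactly $d_{\mathfrak h}F(\Pi)+\frac12[F(\Pi),F(\Pi)]_{\mathfrak h}=0$, which is the Maurer--Cartan equation for $F(\Pi)$ in $\mathfrak h$. I expect the main obstacle to be the two exponential computations --- verifying the coderivation and coalgebra-morphism formulas for $D(e^\Pi)$ and $F(e^\Pi)$ with the correct Koszul signs and factorial normalizations (these simplify because $\Pi$ has even degree in the shifted space, so no signs arise among the $\Pi$'s themselves); once these are in hand the rest is formal. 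One could instead avoid the coalgebra language altogether and substitute $a_1=\dots=a_n=\Pi$ directly into \eqref{eq:Linfmog}, divide by $n!$ and sum over $n$, but then the same combinatorial reorganization of factorials reappears in a less transparent guise, so I prefer the coalgebraic route.
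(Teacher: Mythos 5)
Your proof is correct. The paper's nominal proof of Proposition \ref{prop:MC1} is the direct computation you mention only in your closing sentence: substitute $a_1=\dots=a_n=\Pi$ into \eqref{eq:Linfmog}, divide by $n!$ and sum over $n$ (the paper omits the details). Your coalgebraic route is, however, precisely the argument the paper itself sketches in the remark immediately following the proposition, where the Maurer--Cartan condition for a general $L_\infty$-algebra is recast as $D_V(\exp(\Pi))=0$ and the claim is reduced to the identity $F(\exp(\Pi))=\exp(F(\Pi))$ combined with $D_{\mathfrak h}\circ F=F\circ D_{\mathfrak g}$; you have in effect supplied the details of that ``even easier'' proof and then specialized back to DG Lie algebras by noting that only $D_1$ and $D_2$ survive. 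What the coalgebraic packaging buys is that all the combinatorics is done once, in the two exponential identities $D(e^\Pi)=\bigl(\sum_{k\ge1}\tfrac{1}{k!}D_k(\Pi,\dots,\Pi)\bigr)\wedge e^\Pi$ and $F(e^\Pi)=e^{F(\Pi)}$, both sign-free because $\Pi$ has degree $0$ in $\mathfrak g[1]$; the direct route reproduces the same binomial and multinomial coefficients term by term inside \eqref{eq:Linfmog}. The only point worth flagging is that $e^\Pi$ lives in the completion $\widehat\Lambda_{\mathfrak g}$ rather than in $\Lambda_{\mathfrak g}$ itself, so strictly one should remark that $D$, $F$ and the coproduct extend there; since the paper makes the same blanket convergence assumption, this is consistent with its level of rigor. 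Your final projection step onto $\Lambda^1(\mathfrak h[1])$ is also sound: the Maurer--Cartan expression already lies in $\Lambda^1$, so wedging with the higher terms of $e^{F(\Pi)}$ lands in $\Lambda^{\ge2}$ and only the constant term of the exponential contributes.
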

The proof of this statement is by direct computation with the help of equation \eqref{eq:Linfmog} and we omit it.
\begin{rem}
In fact, the notion of Maurer-Cartan element can be defined for arbitrary $L_\infty$ algebra: it is such an element $\Pi\in V$, that $D_V(\exp(\Pi))=0$, where 
\[
\exp(\Pi)=\sum_{n\ge0}\frac{1}{n!}\underbrace{\Pi\wedge\dots\wedge\Pi}_{n\ \mbox{times}}.
\]
Observe that in a generic case $\exp(\Pi)\in\widehat\Lambda_V$, i.e. it is not an element of the exterior algebra, but only an element in its suitable completion. If we rewrite the equation $D_V(\exp(\Pi))=0$ in ``coordinate'' terms, we obtain the following equation:
\[
d\Pi+\sum_{n\ge2}\frac{1}{n!}D_n(\Pi,\dots,\Pi)=0.
\]
If $V=\mathfrak g$ is a DG Lie algebra, then this equation is reduced to \eqref{eq:MC1}, since $D_n=0\,n\ge3$. It turns out that in this context the statement of proposition \ref{prop:MC1} remains true, i.e. the formula \eqref{eq:MC2} determines a Maurer-Cartan element in $W$. The proof in this case is even easier: just observe that $F(\exp(\Pi))=\exp(F(\Pi))$ under the assumption of general convergence of all series.
\end{rem}
The observation, made in proposition \ref{prop:MC1} helps to relate the $L_\infty$ theory with the deformation problem: first of all, if $A=C^\infty(M)$ for some manifold $M$, one can consider a subcomplex of the Hochschild cohomology complex of $A[[\hbar]]$, spanned by the $\hbar$-linear \textit{local} cochains, i.e. by cochains, determined by polydifferential operators on $M$. Clearly, this subcomplex is closed with respect to the Gerstenhaber brackets, so we have a DG Lie algebra $\mathfrak g^\cdot=\mathcal D_{poly}^\cdot[1](M)[[\hbar]]$ of polydifferential operators on $M$ with Hochschild differential. It is our purpose to find in this DG Lie algebra a Maurer-Cartan element $B\in\mathfrak g^1=\mathcal D^1_{poly}[1](M)[[\hbar]]$, such that $B(f,g)=\frac{\hbar}{2}\{f,g\}+o(\hbar)$.

To this end we observe that the cohomology of local Hochschild complex $\mathcal D_{poly}^\cdot(M)$ on $M$ is isomorphic to the space of polyvector fields $\mathcal T_{poly}^\cdot(M)$ on $M$; the isomorphism is induced by the \textit{Hoschild-Kostant-Rosenberg map} $\chi:\mathcal T_{poly}^\cdot(M)\to\mathcal D_{poly}^\cdot(M)$, determined by the formula:
\[
\chi(\Psi^p)(f_1,\dots,f_p)=\langle df_1\wedge\dots\wedge df_p,\Psi\rangle,
\]
where $\langle,\rangle$ denotes the natural pairing between differential forms and polyvector fields. One can show that this map in fact on the level of cohomology induces not only an isomorphism of vector spaces, but also isomorphism of graded Lie algebras, if we use Gerstenhaber bracket to induce the Lie algebra structure on Hochschild cohomology (in effect it even induces the isomorphism of graded Poisson algebras, if we consider the wedge product on polyvectors and the standard product of Hochschild cochains). However, the map $\chi$ is not a homomorphism of DG Lie algebras (although it commutes with the differentials, if we allow zero differential on $\mathcal T_{poly}^\cdot(M)$). The same remarks of course are true with respect to the $\hbar$-linear complexes: one can define the map $\chi_\hbar:\mathcal T_{poly}^\cdot[1](M)[[\hbar]]\to \mathcal D_{poly}^\cdot[1](M)[[\hbar]]$, which will commute with differentials and induce the isomorphism of Lie algebras on the level of cohomology; however this map will not be a homomorphism of Lie algebras.

On the other hand, the bivector $\frac12\hbar\pi\in\mathcal T_{poly}^1[1](M)[[\hbar]]$ is a solution of the Maurer-Cartan equation if the differential in $\mathcal T_{poly}^1[1](M)[[\hbar]]$ is trivial, and hence as we have seen earlier, if the Hochschild-Kostant-Rosenberg map could be extended to a quasi-isomorphism of $L_\infty$ algebras, the formula \eqref{eq:MC2} applied to $\Pi=\frac12\hbar\pi$ would give us a Maurer-Cartan element in the complex $\mathcal D_{poly}^\cdot[1](M)[[\hbar]]$, beginning with $\chi_\hbar(\frac12\hbar\pi)$ as prescribed in the deformation problem.

The construction of such quasi-isomorphism for algebras of functions on $M=\R^n$ with arbitrary Poisson structure was eventually given by Kontsevich. We shall denote this quasi-isomorphism by $\uc=\{U_n\}:\mathcal T_{poly}^1[1](M)\to\mathcal D_{poly}^1[1](M)$, so that the $\star$-product associated with it is given by
\[
B=\sum_{n\ge 1}\frac{\hbar^n}{n!}U_n(\underbrace{\pi,\dots,\pi}_{n\ \mbox{times}}).
\]
Observe that the convergence is guaranteed in the context of formal power series. We are not going to discuss the details of this construction now; for our purposes it is important to know that \textit{when the $\star$-product is induced by an $L_\infty$-map, in particular, by Kontsevich's quasi-isomorphism, the same construction as above induces the map from $Z_\pi(A(M))$ to $Z(\ac(M))$: for any $f\in Z_\pi(A(M))$ we put
\[
\hat f=\sum_{n\ge0}\frac{\hbar}{n!}U_{n+1}(f,\underbrace{\pi,\dots,\pi}_{n\ \mbox{times}});
\]
an easy computation shows that when $f\in Z_\pi(A(M))$, the inner derivation of $\ac(M)$, induced by $\hat f$ is equal to $0$}. 
\begin{rem}
In effect, this claim follows from the next simple and rather well known observation: if $V$ is an $L_\infty$-algebra and $\Pi$ is a Maurer-Cartan element in $V$, then the formula
\[
D^\Pi_{V,n}(v_1,\dots,v_n)=\sum_{k\ge0}\frac{1}{k!}D_{n+k}(v_1,\dots,v_n,\underbrace{\Pi,\dots,\Pi}_{k\ \mbox{times}})
\]
(under the general assumption of convergence in all such formulas) determines a new $L_\infty$-structure in $V$; we shall denote $V$ equipped with this structure by $(V,D_V^\Pi)$. Similarly if $F=\{F_n\}:V\to W$ is an $L_\infty$-morphism, then the formula
\[
F^\Pi_n(v_1,\dots,v_n)=\sum_{k\ge0}\frac{1}{k!}F_{n+k}(v_1,\dots,v_n,\underbrace{\Pi,\dots,\Pi}_{k\ \mbox{times}})
\]
determines an $L_\infty$-morphism $F^\Pi=\{F_n^\Pi\}:(V,D_V^\Pi)\to (W,D_W^{F(\Pi)})$.

Now, the claim concerning the algebras' centres follows from the fact that in the context of Poisson algebra $f\in Z_\pi(A(M))$ if and only if $D^\pi_{A(M),1}(f)=0$ and $\hat f\in\ac(M)$ is in the center of $\ac(M)$ if and only if $D^B_{\ac(M),1}(\hat f)=0$.
\end{rem}

Thus for all elements in $Z_\pi(A(M))$ we have their counterparts in the center of $\ac(M)$. In the next section we shall discuss the analog of the Nijenhuis equation in the context of $L_\infty$-algebras and its meaning for constructing commutative subalgebras in $\ac(M)$.

\section{$L_\infty$-derivations and Nijenhuis property}

\subsection{Properties of $\mathcal T_{poly}^\cdot[1](M)$ and $\mathcal D_{poly}^\cdot[1](M)$ in low degerees}\label{sec:introLinfder}
In the remaining part of this note, we are going to deal only with DG Lie algebras, related with the deformation theory, i.e. $\mathcal T_{poly}^\cdot[1](M)$ and $\mathcal D_{poly}^\cdot[1](M)$ (although we shall often omit the shift from our notation). Thus it is worth beginning this section with a short list of properties, that we shall need.

First of all, the grading in both algebras begins with $-1$ (after shift), or from $0$; the differential vanishes on the lowest degree in both algebras and the lowest degree elements in both algebras commute. 

If we go further, we see that Hochschild-Kostant-Rosenberg map, although not a homomorphism of DG Lie algebras, intertwines the commutators in the lowest degrees (i.e $-1$ and $0$ in the shifted case): on the level $-1$ this is evident since on both sides commutator vanishes; for two vector fields $\xi,\,\eta\in\mathcal T_{poly}^0[1](M)$, their commutator is equal to the difference of their compositions:
\[
[\xi,\eta](f)=\xi(\eta(f))-\eta(\xi(f)),
\]
but the same is true for $\chi(\xi)=\xi$ and $\chi(\eta)=\eta$. Similarly, the commutator of $f\in\mathcal T_{poly}^{-1}[1](M)$ with $\xi$ is equal to $\xi(f)\in\mathcal T_{poly}^{-1}[1](M)$; and similarly Gerstenhaber bracket of $f$ and $\xi=\chi(\xi)$ gives the same result.

Also let us recall that the Poisson bracket of two functions can be written as the following combination of elements in $\mathcal T_{poly}^\cdot[1](M)$:
\[
\{f,g\}=[f,[\pi,g]]
\]
(where $[,]$ stand for the Schouten brackets). If we use the Lichnerowicz-Poisson differential $d_\pi(\psi)=[\pi,\psi]$ on $\mathcal T_{poly}^\cdot[1](M)$, we can rewrite this formula as $\{f,g\}=[f,d_\pi g]$. The skew-symmetry of this operation is then ensured by the Jacobi identity and the fact of commutativity of $\mathcal T_{poly}^{-1}[1](M)$, mentioned above.

Similarly, if $\Pi$ is a Maurer-Cartan element in $\mathcal D_{poly}^\cdot[1](M)$, then we have the following equation
\[
f\star g-g\star f=[f,[\Pi,g]],
\]
for all $f,g\in A(M)$, where $\star$ is the deformed multiplication, determined by $\Pi$ and $[,]$ is the Gerstenhaber bracket. In other words, the commutator in $\ac(M)$ is defined by the same formula as the Poisson algebra structure on $A(M)$, therefore we are going to denote this commutator by the same symbol $f\star g-g\star f=\{f,g\}$, when it can cause no ambiguity. As before, the skew-commutativity of this operation follows from Jacobi identity and the commutativity of the Gerstenhaber brackets on functions. Observe that in this context the Jacobi identity for both braces follows from the Maurer-Cartan equation and the fact that differentials vanish on degree $-1$ elements.

\subsection{$L_\infty$-derivations, Maurer-Cartan elements and Nijenhuis conditions}
We begin with the following definition, very similar to the definitions, given in previous section:
\begin{df}
Let $V$ be an $L_\infty$-algebra, in particular we can take $V=\g$, where $\g$ is a DG Lie algebra. A map $\mathscr X:\Lambda_V\to\Lambda_V$ is called $L_\infty$-derivation, if
\begin{description}
\item[{\rm(\textit{i})}] $\mathscr X$ is a coderivation of the free coalgebra;
\item[{\rm(\textit{ii})}] $\mathscr X$ commutes with the structure map $D$.
\end{description}
\end{df}
In particular, as before the map $\mathscr X$ is determined by its ``Taylor coefficients'' and one can write down the condition (\textit{ii}) in terms of these coefficients and the structure maps $D_{V,i}$. In what follows we shall only consider the $L_\infty$-derivations for DG Lie algebras, so let $\mathscr X=\{X_n\}$ be an $L_\infty$-derivation of a DGLa \g; then the maps $X_n:\wedge^n\g\to\g$ verify the equalities
\begin{equation}\label{eq:xxxder2}
\begin{aligned}
dX_{n+1}(a_0,\dots,a_n)&=\sum_{0\le i<j\le n}(-1)^{\epsilon_{i,j}}X_n([a_i,a_j],a_0,\dots,\widehat{a}_i,\dots,\widehat{a}_j,\dots,a_n)\\
                                        &+\sum_{i=0}^n(-1)^{\epsilon_i}[a_i,X_n(a_0,\dots,\widehat a_i,\dots,a_n)].
\end{aligned}
\end{equation}
Here as earlier $\epsilon_{i,j},\ \epsilon_i$ are the signs, determined by the permutations, which place the elements $a_i,a_j$ in front of the others and\ \ $\widehat{}$\ \ denotes the omission of an element.

Now for a MC element $\pi\in\g$ consider the element
\[
\mathscr X(\pi)=\sum_{n=1}^\infty \frac{1}{n!}X_n(\pi,\dots,\pi).
\]
As before we assume that all the formulas of this sort enjoy the convergence property. More generally, for arbitrary $a\in\g$ we put:
\[
\mathscr X_\pi(a)=\sum_{n=1}^\infty\frac{1}{(n-1)!}X_n(a,\pi,\dots,\pi).
\]
Then 
\begin{prop}
\label{prop:xscr1}
The element $\mathscr X(\pi)$ is closed with respect to the $\pi$-twisted differential in \g: $d_\pi(a)=da+[\pi,a]$, i.e. $d\mathscr X(\pi)=-[\pi,\mathscr X(\pi)]$. If in addition we suppose that $\mathscr X_\pi(\mathscr X(\pi))=0$, i.e.
\[
\sum_{p\ge 1}\frac{1}{(p-1)!}X_p(\pi,\dots,\pi,\mathscr X(\pi,\dots,\pi))=0,
\]
then it also verifies the equation $[\mathscr X(\pi),\mathscr X(\pi)]=0$. 
\end{prop}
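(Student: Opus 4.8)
The plan is to feed the Maurer--Cartan element into the defining identity \eqref{eq:xxxder2} and resum, but the whole computation is cleanest if organized through the twisting construction recalled in the remark preceding the statement. First I would observe that twisting by $\pi$ turns the $L_\infty$-derivation $\mathscr X=\{X_n\}$ into an $L_\infty$-derivation $\mathscr X^\pi=\{X^\pi_n\}$ of the twisted DG Lie algebra $(\g,d_\pi,[,])$, with $X^\pi_n(a_1,\dots,a_n)=\sum_{k\ge0}\frac{1}{k!}X_{n+k}(a_1,\dots,a_n,\pi,\dots,\pi)$, exactly as for the twisted morphism $F^\Pi$ in that remark. The point of this reformulation is that the two objects in the statement are the lowest Taylor coefficients of $\mathscr X^\pi$: the $0$-ary part is $X^\pi_0=\mathscr X(\pi)$ and the linear part is $X^\pi_1=\mathscr X_\pi$. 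Since $\pi$ solves \eqref{eq:MC1}, the twisted structure is a genuine differential ($d_\pi^2=0$) and its $0$-ary ``curvature'' vanishes.

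For the first assertion I would read off the arity-$0$ component of the relation $\mathscr X^\pi D^\pi=D^\pi\mathscr X^\pi$. As the twisted curvature is zero, the only surviving contribution is $D^\pi_1(X^\pi_0)=d_\pi\mathscr X(\pi)$, which must therefore vanish; writing out $d_\pi=d+[\pi,-]$ gives $d\mathscr X(\pi)=-[\pi,\mathscr X(\pi)]$. Avoiding the twisting language, the same result follows by plugging $a_0=\dots=a_n=\pi$ into \eqref{eq:xxxder2}: since $\pi$ has even degree after the shift all Koszul signs trivialize, the combinatorial multiplicities are $\binom{n+1}{2}$ and $n+1$, and the relation $[\pi,\pi]=-2d\pi$ converts the first sum into terms $X_n(d\pi,\pi,\dots,\pi)$. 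After dividing by $(n+1)!$ and summing, these recombine, and the terms coming from the internal differential packaged inside $dX_{n+1}$ cancel the $\mathscr X_\pi(d\pi)$ contributions, leaving precisely $d\mathscr X(\pi)+[\pi,\mathscr X(\pi)]=0$.

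For the second assertion I would use the arity-$1$ component of $\mathscr X^\pi D^\pi=D^\pi\mathscr X^\pi$. For a derivation carrying a nonzero $0$-ary part this is the homotopy-Leibniz identity corrected by that part, namely $d_\pi\mathscr X_\pi(a)\pm\mathscr X_\pi(d_\pi a)=[\mathscr X(\pi),a]$ for all $a\in\g$, the bracket arising from feeding $X^\pi_0=\mathscr X(\pi)$ and $a$ into $D^\pi_2=[,]$. Evaluating at $a=\mathscr X(\pi)$ makes the right-hand side $[\mathscr X(\pi),\mathscr X(\pi)]$, while on the left the term $\mathscr X_\pi(d_\pi\mathscr X(\pi))$ vanishes by the first assertion and the term $d_\pi\mathscr X_\pi(\mathscr X(\pi))$ vanishes by the standing hypothesis $\mathscr X_\pi(\mathscr X(\pi))=0$. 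Hence $[\mathscr X(\pi),\mathscr X(\pi)]=0$.

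The conceptual content is light once the twisting is in place, so the real work is bookkeeping: justifying that twisting sends an $L_\infty$-derivation to an $L_\infty$-derivation of the twisted algebra, and extracting its arity-$0$ and arity-$1$ structure relations with the correct signs and factorials while consistently handling the $0$-ary (``curved'') contribution. In the direct approach the same difficulty reappears as the need to check that the internal-differential terms hidden inside $dX_{n+1}$ in \eqref{eq:xxxder2} exactly cancel the unwanted $\mathscr X_\pi(d\pi)$ terms produced by the Maurer--Cartan substitution; securing that cancellation, and pinning down the single surviving sign, is where the care is required.
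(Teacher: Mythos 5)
Your proposal is correct and follows essentially the same route as the paper: the paper likewise obtains $d_\pi\mathscr X(\pi)=0$ by substituting $\pi$ into \eqref{eq:xxxder2}, resumming, and invoking the Maurer--Cartan equation, and then proves the arity-one identity $d_\pi(\mathscr X_\pi(a))-\mathscr X_\pi(d_\pi a)=[\mathscr X(\pi),a]$ (its equations \eqref{eq:xpi1}--\eqref{eq:xpi2}) and evaluates it at $a=\mathscr X(\pi)$ exactly as you do. Your packaging of these two identities as the arity-$0$ and arity-$1$ components of the $\pi$-twisted derivation is only a cosmetic reorganization of the same computation.
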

\begin{proof}
We compute with the help of the equation \eqref{eq:xxxder2} (the signs do not appear, since the degree of $\pi$ is $0$ in $\wedge^*\g$):
\[
\begin{aligned}
d(\mathscr X(\pi))&=\sum_{n=1}^\infty \frac{1}{n!}\Bigl(dX_n(\pi,\dots,\pi)+\sum_{i=1}^nX_n(\pi,\dots,\underset{i}{d\pi},\dots,\pi)\Bigr)\\
                         &=\sum_{n=1}^\infty\frac{1}{n!}\Bigl(\sum_{1\le i<j\le n}X_{n-1}([\pi,\pi],\pi,\dots,\underset{i}{\widehat{\pi}},\dots,\underset{j}{\widehat{\pi}},\dots,\pi)\\
                         &\quad+\sum_{i=1}^n(-1)^{\epsilon_i}[\pi,X_{n-1}(\pi,\dots,\underset{i}{\widehat\pi},\dots,\pi)]+\sum_{i=1}^nX_n(\pi,\dots,\underset{i}{d\pi},\dots,\pi)\Bigr)\\
                         &=\sum_{n=1}^\infty\frac{1}{n!}\Bigl(\frac{n(n-1)}{2}X_{n-1}([\pi,\pi],\pi,\dots,\pi)+n[\pi,X_{n-1}(\pi,\dots,\pi)]\\
                         &\quad+nX_n(d\pi,\pi,\dots,\pi)\Bigr)\\
                         &=\sum_{n=1}^\infty\Bigl(\frac{1}{(n-1)!}X_n(d\pi,\pi,\dots,\pi)+\frac{1}{2(n-2)!}X_{n-1}([\pi,\pi],\pi,\dots,\pi)\\
                         &\quad+\frac{1}{(n-1)!}[\pi,X_{n-1}(\pi,\dots,\pi)]\Bigl)\\
                         &=\sum_{n=2}^\infty\frac{1}{(n-1)!}X_n(d\pi+\frac12[\pi,\pi],\pi,\dots,\pi)-\left[\pi,\sum_{n=1}^\infty\frac{1}{n!}X_n(\pi,\dots,\pi)\right]\\
                         &=-[\pi,\mathscr X(\pi)].
\end{aligned}
\]
Here we used the Maurer-Cartan equation $d\pi+\frac12[\pi,\pi]=0$. In a similar way, one can prove the equality
\begin{equation}
\label{eq:xpi1}
d\left(\mathscr X_\pi(a)\right)=[\pi,\mathscr X_\pi(a)]+[\mathscr X(\pi),a]+\mathscr X_\pi(d_\pi a).
\end{equation}
for arbitrary $a\in\g$. Now, from the latter formula, the equality $d_\pi\mathscr X(\pi)=0$ and the equation 
\[
\mathscr X_\pi(\mathscr X(\pi))=\sum_{p,q=1}^\infty \frac{1}{(p-1)!q!}X_{p}(\pi,\dots,\pi,X_q(\pi,\dots,\pi))=0,
\]
we see
\[
0=d(\mathscr X_\pi(\mathscr X(\pi)))=[\pi,\mathscr X_\pi(\mathscr X(\pi))]+[\mathscr X(\pi),\mathscr X(\pi)]+\mathscr X_\pi(d_\pi\mathscr X(\pi))=[\mathscr X(\pi),\mathscr X(\pi)].
\]
\end{proof}
\begin{df}
We shall say, that the $L_\infty$-derivarion $\mathscr X$ of \g\ verifies weak $L_\infty$-Nijenhuis property with respect to a Maurer-Cartan element $\pi$ in $\g$, if $\mathscr X_\pi(\mathscr X(\pi))=0$.
\end{df}
\begin{rem}
The proposition \ref{prop:xscr1} means that $\pi+\mathscr X(\pi)$ is a Maurer-Cartan elementin \g :
\[
\begin{aligned}
d(\pi+\mathscr X(\pi))&+\frac12[\pi+\mathscr X(\pi),\pi+\mathscr X(\pi)]=\\
                                  &=d\pi+\frac12[\pi,\pi]+d\mathscr X(\pi)+\frac12([\pi,\mathscr X(\pi)]+[\mathscr X(\pi),\pi])+\frac12[\mathscr X(\pi),\mathscr X(\pi)]=0.
\end{aligned}
\]
Also observe that we can rewrite equation \eqref{eq:xpi1} as
\begin{equation}
\label{eq:xpi2}
(d_\pi\mathscr X_\pi)(a)=d_{\pi}(\mathscr X_\pi(a))-\mathscr X_\pi(d_\pi a)=[\mathscr X(\pi),a].
\end{equation}
\end{rem}

\subsection{$L_\infty$-Nijenhuis property and commutation relations}
We are going to find the relation between the map $\mathscr X_\pi$ and the brackets in \g. To this end we compute:
\[
\begin{aligned}
d(X_{p+2}&(x,y,\pi,\dots,\pi))=X_{p+2}(dx,y,\pi,\dots,\pi)+(-1)^{|x|}\Bigl(X_{p+2}(x,dy,\pi,\dots,\pi)\\
                 &+\sum_{i=3}^{p+2}(-1)^{|y|}X_{p+2}(x,y,\pi,\dots,\underset{i}{d\pi},\dots,\pi)\Bigr)+X_{p+1}([x,y],\pi,\dots,\pi)\\
                  &+\sum_{i=3}^{p+2}\left(X_{p+1}([x,\pi],y,\pi,\dots,\underset{i}{\widehat\pi},\dots,\pi)+(-1)^{|x|}X_{p+1}(x,[y,\pi],\pi,\dots,\underset{i}{\widehat\pi},\dots,\pi)\right)\\
                  &+\sum_{3\le i<j\le p+1}(-1)^{|x|+|y|}X_{p+1}(x,y,[\pi,\pi],\pi,\dots,\underset{i}{\widehat\pi}\dots,\underset{j}{\widehat\pi},\dots,\pi)\\
                  &+[x,X_{p+1}(y,\pi,\dots,\pi)]+(-1)^{|x||y|}[y,X_{p+1}(x,\pi,\dots,\pi)]\\
                  &+\sum_{i=3}^{p+1}[\pi,X_{p+1}(x,y,\pi,\dots,\underset{i}{\widehat\pi},\dots,\pi)].
\end{aligned}
\]
Here $|x|,\,|y|$ are the degrees of $x$ and $y$ in the exterior powers of \g. Multiplying this expression by $\frac{1}{p!}$ and summing up for $p=0,1,2,\dots$ we obtain the formula:
\[
d_\pi\mathscr X_\pi(x,y)=\mathscr X_\pi([x,y])-[x,\mathscr X_\pi(y)]-(-1)^{|x||y|}[y,\mathscr X_\pi(x)],
\]
where we use the notation $\mathscr X_\pi(x,y)=\sum_{p=0}^\infty\frac{1}{p!}X_{p+2}(x,y,\pi,\dots,\pi)$ and $d_\pi$ on the left hand side denotes the usual differential of a map. Let us now assume that $\g=\mathcal T_{poly}^\cdot[1](M)$ or $\g=\mathcal D_{poly}^\cdot[1](M)$ and consider $y=d_\pi g,\,x=f$, where $|f|=|g|=-1$. In this case $\mathscr X_\pi(x,y)=0$ (because of dimension restrictions) and we have:
\[
\mathscr X_\pi(d_\pi f,d_\pi g)=\mathscr X_\pi(\{f,g\})-[f,\mathscr X_\pi(d_{\pi}g)]+\{g,\mathscr X_\pi(f)\}
\]
where we use the observation that $[f,d_\pi g]=\{f,g\}$, the commutator of $f$ and $g$ with respect to $\pi$, see section \ref{sec:introLinfder}. Similarly, from \eqref{eq:xpi2} we see $[f,\mathscr X_\pi(d_{\pi}g)]=\{f,\mathscr X_\pi(g)\}+[f,[\mathscr X(\pi),g]]$ and we have:
\begin{equation}
\label{eq:xder1}
\mathscr X_\pi(d_\pi f,d_\pi g)=\mathscr X_\pi(\{f,g\})-\{\mathscr X_\pi(f),g\}-\{f,\mathscr X_\pi(g)\}-[f,[\mathscr X(\pi),g]].
\end{equation}
Another important example is $x=f,\,y=[\mathscr X(\pi),g]$: first we compute
\[
d_\pi\mathscr X_\pi(\mathscr X(\pi),g)=\mathscr X_\pi([\mathscr X(\pi),g])-[\mathscr X(\pi),\mathscr X_\pi(g)].
\]
Here we used the equation $d_\pi\mathscr X(\pi)=0$. Next, using this equation we compute
\begin{equation}
\label{eq:xder2}
\begin{aligned}[m]
d_\pi&\mathscr X_\pi(f,[\mathscr X(\pi),g])=\mathscr X_\pi([f,[\mathscr X(\pi),g]])-[f,\mathscr X_\pi([\mathscr X(\pi),g])]-[[\mathscr X(\pi),g],\mathscr X_\pi(f)]\\
        &=\mathscr X_\pi([f,[\mathscr X(\pi),g]])-[f,d_\pi\mathscr X_\pi(\mathscr X(\pi),g)]-[f,[\mathscr X(\pi),\mathscr X_\pi(g)]]-[\mathscr X_\pi(f),[\mathscr X(\pi),g]]
\end{aligned}
\end{equation}
Let now $f,\,g$ belong to the ``center of $\pi$-deformed product'', i.e. let $d_\pi f=d_\pi g=0$ (see section \ref{sec:introLinfder}). Then by \eqref{eq:xder1} we have:
\[
\mathscr X_\pi(\{f,g\})=\{\mathscr X_\pi(f),g\}+\{f,\mathscr X_\pi(g)\}+[f,[\mathscr X(\pi),g]],
\]
and so $\mathscr X_\pi(\{f,g\})=\{\mathscr X_\pi(f),g\}=\{f,\mathscr X_\pi(g)\}=[f,[\mathscr X(\pi),g]]=0$.
Next we apply \eqref{eq:xder1} to $f,\,\mathscr X_\pi(g)$ where $d_\pi f=0$:
\[
\mathscr X_\pi(\{f,\mathscr X_\pi(g)\})=\{\mathscr X_\pi(f),\mathscr X_\pi(g)\}+\{f,\mathscr X_\pi(\mathscr X_\pi(g))\}+[f,[\mathscr X(\pi),\mathscr X_\pi(g)]].
\]
So: 
\begin{equation}
\label{eq:xder3}
\{\mathscr X_\pi(f),\mathscr X_\pi(g)\}+[f,[\mathscr X(\pi),\mathscr X_\pi(g)]]=0.
\end{equation}
Similarly, taking the pair $\mathscr X_\pi(f),\,g$ with $d_\pi g=0$ we get:
\begin{equation}
\label{eq:xder4}
\{\mathscr X_\pi(f),\mathscr X_\pi(g)\}+[g,[\mathscr X(\pi),\mathscr X_\pi(f)]]=0
\end{equation}
(here we used the identity $[f,g]=0$ for all functions). Next, we apply equation \eqref{eq:xder2}: since $d_\pi f=d_\pi g=d_\pi\mathscr X(\pi)=0$ the left hand side of this equation vanishes. The first term on the right is equal to $0$ since $[f,[\mathscr X(\pi),g]]=0$, and the second term is equal to $\{f,\mathscr X_\pi(\mathscr X(\pi),g)\}=0$ since $f$ is in center. Thus this equation amounts to
\begin{equation}
\label{eq:xder5}
[f,[\mathscr X(\pi),\mathscr X_\pi(g)]]+[g,[\mathscr X(\pi),\mathscr X_\pi(f)]]=0.
\end{equation}
Now, summing up the equations \eqref{eq:xder3} and \eqref{eq:xder4} and subtracting \eqref{eq:xder5} we get:
\[
2\{\mathscr X_\pi(f),\mathscr X_\pi(g)\}=0.
\]
Thus, $\{\mathscr X_\pi(f),\mathscr X_\pi(g)\}=0$.

\section{Argument shift in deformed algebras}
\subsection{Strong Nijenhuis property}
We saw, that in the case when $\mathscr X$ verifies the weak Nijenhuis condition, the elements $\mathscr X_\pi(f),\,\mathscr X_\pi(g)$ commute, although they are not in general central: one sees from equation \eqref{eq:xpi2} that in this case
\[
d_\pi\mathscr X_\pi(f)=[\mathscr X(\pi),f],
\]
which need not vanish. However, weak Nijenhuis property is not enough to prove the commutativity of $\mathscr X_\pi^k(f),\,\mathscr X_\pi^l(g)$ for all $k$ and $l$. In order to prove this alongside the reasoning from the first section, we shall need a stronger condition. To this end we observe that the formal exponentiation of an $L_\infty$-derivation as a map from $\Lambda_V$ to itself gives an $L_\infty$-automorphism of $V$. Indeed, the commutation with $D_V$ follows from the definitions, and the fact that exponent of a coderivation is a homomorphism of coalgebras is trivial. As we have explained earlier in section \ref{sec:MCel}, any Maurer-Cartan element $\Pi$ in $V$ determines a deformation of the differential $D_{V,1}=d$ in $V$ and any $L_\infty$-morphism can be applied to Maurer-Cartan elements and can be extended to an $L_\infty$-morphism between the $L_\infty$-algebras with differentials, deformed by the Maurer-Cartan elements.

Summing up, we have the following a bit cumbersome proposition:
\begin{prop}
Let $V$ be an $L_\infty$-algebra, $\mathscr X=\{X_n\}_{n\ge1}:\Lambda_V\to\Lambda_V$ an $L_\infty$-derivation of $V$ and $\Pi\in V$ a Maurer-Cartan element. Let $\exp(\mathscr X)$ be the map with ``Taylor coefficients''
\[
\begin{aligned}
&\exp(\mathscr X)_n(a_1,\dots,a_n)=\\
&=\sum_k\frac{1}{k!}\sum_{i_1+i_2+\dots+i_p-p=n+1}\frac{1}{i_1!(i_2-1)!\dots(i_p-1)!}\\
 &\,\sum_{\sigma\in S_n}X_{i_p}(\dots(X_{i_2}(X_{i_1}(a_{\sigma(1)},\dots,a_{\sigma(i_1)}),a_{\sigma(i_1+1)},\dots,a_{\sigma(i_1+i_2-1)}),\dots),a_{\sigma(i_1+\dots+i_{p-1}+3-p)},\dots,a_{\sigma(n)}).
\end{aligned}
\]
Then $\exp(\mathscr X)$ is an $L_\infty$-morphism. Moreover, $\exp(\mathscr X)(\Pi)$ is a Maurer-Cartan element in $V$ and if $\mathscr X_\Pi$ is given by the formula
\begin{equation}\label{eq:pider}
\mathscr X_{\Pi,n}(a_1,\dots,a_n)=\sum_{p\ge0}\frac{1}{p!}X_{n+p}(a_1,\dots,a_n,\underbrace{\Pi,\dots,\Pi}_{p\ \mbox{times}})
\end{equation}
then $\mathscr X_\Pi$ is an $L_\infty$-derivative between $(V,D_{V,\Pi})$ (the $L_\infty$-algebra $V$ with $\Pi$-deformed differential $D_{V,\Pi}$) and $(V,D_{V,\exp(\mathscr X)(\Pi)})$, which means that the following equality holds:
\[
\mathscr X_\Pi\circ D_{V,\Pi}=D_{V,\exp(\mathscr X)(\Pi)}\circ\mathscr X_\Pi.
\]
In particular, $\exp(\mathscr X_\Pi)$ is an $L_\infty$-morphism between $(V,D_{V,\Pi})$ and $(V,D_{V,\exp(\mathscr X)(\Pi)})$.
\end{prop}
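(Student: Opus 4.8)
The plan is to avoid the explicit ``Taylor coefficient'' formulas as far as possible and to argue on the cofree cocommutative coalgebra $\Lambda_V=\Lambda^*(V[1])$ and its completion $\widehat\Lambda_V$, where all four assertions become short manipulations with coalgebra maps. I would use the standard dictionary throughout: an $L_\infty$-structure is a square-zero degree-$1$ coderivation $D_V$; an $L_\infty$-morphism is a counit-preserving coalgebra map commuting with the codifferentials; an $L_\infty$-derivation $\mathscr X$ is a (degree $0$) coderivation commuting with $D_V$; and a Maurer--Cartan element $\Pi$ is encoded by the group-like element $\exp(\Pi)=\sum_n\frac1{n!}\Pi^{\wedge n}\in\widehat\Lambda_V$ with $D_V\exp(\Pi)=0$.

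First I would dispose of the claim that $\exp(\mathscr X)$ is an $L_\infty$-morphism. Since $\mathscr X$ is a coderivation, an induction gives the iterated Leibniz rule $\Delta\mathscr X^{n}=\sum_{k}\binom nk(\mathscr X^{k}\otimes\mathscr X^{n-k})\Delta$ (no Koszul signs, as $\mathscr X$ is even), and summing against $\frac1{n!}$ yields $\Delta\exp(\mathscr X)=(\exp(\mathscr X)\otimes\exp(\mathscr X))\Delta$, so $\exp(\mathscr X)$ is a coalgebra map. Every power $\mathscr X^{n}$ commutes with $D_V$, hence so does $\exp(\mathscr X)$. Projecting $\frac1{n!}\mathscr X^{n}$ onto the cogenerators $V[1]$ and collecting terms reproduces the nested formula of the statement; the prefactors $\tfrac{1}{i_1!(i_2-1)!\cdots(i_p-1)!}$ are precisely the multiplicities with which a given iterated composition of the $X_{i_j}$ occurs in the coproduct expansion, so this is pure (if tedious) bookkeeping. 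The second assertion is then immediate: $\exp(\mathscr X)$ preserves group-like elements and the counit, hence $\exp(\mathscr X)\big(\exp(\Pi)\big)=\exp\big(\exp(\mathscr X)(\Pi)\big)$, and commuting with $D_V$ forces $D_V\exp\big(\exp(\mathscr X)(\Pi)\big)=0$, i.e. $\exp(\mathscr X)(\Pi)$ is Maurer--Cartan; this is just Proposition \ref{prop:MC1} together with its Remark.

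For the third assertion I would introduce the group-like translation $e_\Pi\colon c\mapsto\exp(\Pi)\wedge c$ on $\widehat\Lambda_V$. Because $\exp(\Pi)$ is group-like and $\Delta$ is multiplicative, $e_\Pi$ is a coalgebra automorphism with $e_\Pi^{-1}=e_{-\Pi}$, and a direct check identifies the twisted structure of the earlier Remark as a conjugation, $D_{V,\Pi}=e_{-\Pi}\,D_V\,e_\Pi$, and likewise $D_{V,\exp(\mathscr X)(\Pi)}=e_{-\Psi}\,D_V\,e_{\Psi}$ with $\Psi:=\exp(\mathscr X)(\Pi)$ (here Maurer--Cartanness of $\Pi$ is what makes these curvature-free, since $D_{V,\Pi}(1)=e_{-\Pi}D_V\exp(\Pi)=0$). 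Writing $\mathscr X_\Pi:=e_{-\Psi}\,\mathscr X\,e_\Pi$ --- whose corestriction to the cogenerators is exactly \eqref{eq:pider}, because the left translation $e_{-\Psi}$ leaves the cogenerator component untouched on the scalar-free image of $\mathscr X e_\Pi$ --- the required identity collapses to the single conjugation computation
\[
\mathscr X_\Pi\,D_{V,\Pi}=e_{-\Psi}\mathscr X e_\Pi\, e_{-\Pi}D_V e_\Pi=e_{-\Psi}\mathscr X D_V e_\Pi=e_{-\Psi}D_V\mathscr X e_\Pi=e_{-\Psi}D_V e_{\Psi}\,e_{-\Psi}\mathscr X e_\Pi=D_{V,\exp(\mathscr X)(\Pi)}\,\mathscr X_\Pi,
\]
where the middle step uses $\mathscr X D_V=D_V\mathscr X$. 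The same pattern applied to the automorphism $\exp(\mathscr X)$ in place of $\mathscr X$ produces $\exp(\mathscr X)^\Pi:=e_{-\Psi}\exp(\mathscr X)e_\Pi$, a coalgebra map intertwining $D_{V,\Pi}$ and $D_{V,\Psi}$, which is the morphism denoted $\exp(\mathscr X_\Pi)$ in the final sentence.

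The main obstacle is not any single computation but the reconciliation of the two languages. On the combinatorial side $\mathscr X_\Pi$ is given by the $\Pi$-insertion formula \eqref{eq:pider}, which is linear in the $X_n$; on the coalgebra side the natural objects are the cross-translated maps $e_{-\Psi}(\,\cdot\,)e_\Pi$, which --- because source and target are twisted by the \emph{different} elements $\Pi$ and $\Psi=\exp(\mathscr X)(\Pi)$ --- are genuinely \emph{not} coderivations, so one cannot naively read $\exp(\mathscr X_\Pi)$ as an operator exponential but must instead interpret it as the twist of the exponential and verify that twisting and exponentiation commute at the level of Taylor coefficients. Pinning down this compatibility, together with honest control of the Koszul signs suppressed above (the element $\Pi$ is even, which kills most of them, yet the cogenerator-projection arguments still require them), is where I expect the real effort to lie; the remainder is the formal coalgebra manipulation sketched here.
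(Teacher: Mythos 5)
Your argument is correct in outline, and it is worth noting that the paper does not really prove this proposition at all: it is introduced with ``Summing up, we have\dots'', and its entire justification is the preceding two-sentence observation (the exponential of a coderivation is a coalgebra map, and commutation with $D_V$ ``follows from the definitions'') together with back-references to Proposition \ref{prop:MC1} and the Remark of Section \ref{sec:MCel}, everything being phrased in the language of Taylor coefficients and $\Pi$-insertion formulas. Your route differs in one genuine respect: you encode the twisting as conjugation by the group-like translations $e_\Pi(c)=\exp(\Pi)\wedge c$, so that $D_{V,\Pi}=e_{-\Pi}D_Ve_\Pi$ and the intertwining identity $\mathscr X_\Pi\circ D_{V,\Pi}=D_{V,\exp(\mathscr X)(\Pi)}\circ\mathscr X_\Pi$ becomes the one-line conjugation computation you display, whereas the paper stays entirely in the coefficient language and would have to check the same identity by the sort of term-by-term manipulation it performs in the proof of Proposition \ref{prop:xscr1}. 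What your approach buys is precisely that one-line proof; what it costs is the dictionary you honestly flag at the end, and one item of it deserves emphasis: since $\mathscr X(\exp(\Pi))=\mathscr X(\Pi)\wedge\exp(\Pi)$, the map $e_{-\Psi}\mathscr Xe_\Pi$ has a nonzero zeroth Taylor coefficient $\mathscr X_{\Pi,0}=\mathscr X(\Pi)$ and is a coderivation along the coalgebra map $e_{\Pi-\Psi}$ rather than an honest coderivation. This is consistent with the paper's indexing $\{\mathscr X_{\pi,n}\}_{n\ge0}$ and its notion of ``twisted $L_\infty$-derivation'', and it is exactly why, as you correctly observe, $\exp(\mathscr X_\Pi)$ must be read as the twist $e_{-\Psi}\exp(\mathscr X)e_\Pi$ of the exponential rather than as the operator exponential of $e_{-\Psi}\mathscr Xe_\Pi$. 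Modulo writing out that bookkeeping (which is routine, since $\Pi$ is even and $e_{-\Psi}$ does not disturb the cogenerator component of scalar-free elements, as you note), your proof is complete and, if anything, more explicit than the source.
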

In the particular case $V=\g$ and $\Pi=\pi$ (the usual Maurer-Cartan element in DG Lie algebra) the map $\mathscr X_\pi=\{\mathscr X_{\pi,n}\}_{n\ge0}:\Lambda_\g\to\Lambda_\g$, with ``Taylor coefficients'' given by equation \eqref{eq:pider} will verify the following identities, similar to \eqref{eq:xxxder2}
\[
\begin{aligned}
(d\mathscr X_{\pi,n+1})(a_0,\dots,a_n)&=\sum_{0\le i<j\le n}(-1)^{\epsilon_{i,j}}\mathscr X_{\pi,n}([a_i,a_j],a_0,\dots,\widehat{a}_i,\dots,\widehat{a}_j,\dots,a_n)\\
                                        &+\sum_{i=0}^n(-1)^{\epsilon_i}[a_i,\mathscr X_{\pi,n}(a_0,\dots,\widehat a_i,\dots,a_n)].
\end{aligned}
\]
However, here $d\mathscr X_{\pi,n+1}$ stands for the commutator
\[
[d,\mathscr X_{\pi,n+1}]=d_{\exp(\mathscr X)(\pi)}\circ\mathscr X_{\pi,n+1}+(-1)^n\mathscr X_{\pi,n+1}\circ d_\pi.
\]
We shall call maps with this property \textit{twisted $L_\infty$-derivations}.
\begin{rem}\rm
Of course, all the statements of this section are still true for the maps $\exp(t\mathscr X),\,\exp(t\mathscr X_\pi)$ etc. We shall use this observation below.
\end{rem}

It is clear, that the maps we discussed above are closely related with this construction: $\mathscr X_\pi(a),\,\mathscr X_\pi(x,y)$ are just the $1$-st and the $2$-nd ``Taylor coefficients'' of this twisted $L_\infty$-derivation. This brings forth the following definition:
\begin{df}
We shall say, that $\mathscr X$ verifies the (strong) Nijenhuis condition with respect to $\pi$, if 
\[
\mathscr X_{\pi,n}(a_1,\dots,a_n)=0,\ \mbox{when $a_i=\mathscr X(\pi)$ for some $i=1,\dots,n$}.
\]
\end{df}
Of course, every strong Nijenhuis derivation verifies the weak Nijenhuis property, so all the statements from the previous section remain valid. Below we shall show that strong Nijenhuis property of an $L_\infty$-derivation of DG Lie algebras allows one perform the trick from the proposition \ref{prop:ashiftcl} and thus obtain commutative subalgebras in quantized algebras. One of the problems of the theory that we develop here is that so far we have no example of a ``genuine'' (strong) Nijenhuis derivations, which would not come from the considerations of the usual Nijenhuis fields on a Poisson manifold.

\subsection{Argument shift for strong Nijenhuis derivations}
Let now \g\ be a deformation the DG Lie algebra, i.e. $\g=\mathcal T_{poly}^{-1}[1](M)$ or $\g=\mathcal D_{poly}^{-1}[1](M)$. 
Now we claim that the following is true:
\begin{prop}\label{prop:shiftLinf1}
Let $\mathscr X$ be an $L_\infty$-derivation of a deformation DG Lie algebra \g, which verifies the strong Nijenhuis condition with respect to a Maurer-Cartan element $\pi$, then
\[
\mathscr X_{\pi,1}(\{x,y\})=\{\mathscr X_{\pi,1}(x),y\}+\{x,\mathscr X_{\pi,1}(y)\}+[x,[\mathscr X(\pi),y]]
\]
and 
\[
\mathscr X_{\pi,1}([x,[\mathscr X(\pi),y]])=[\mathscr X_{\pi,1}(x),[\mathscr X(\pi),y]]+[x,[\mathscr X(\pi),\mathscr X_{\pi,1}(y)]]
\]
for all $x,y\in\g^{-1}$ of the form $x=\mathscr X_{\pi,1}^k(f),\,y=\mathscr X_{\pi,1}^l(g),\, k,l=0,1,2,\dots$ with $f,\,g$ in the center of $\pi$, i.e. if $d_\pi f=0=d_\pi g$.
\end{prop}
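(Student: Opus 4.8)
The plan is to reduce both identities to the vanishing of a few values of the second Taylor coefficient $\mathscr X_{\pi,2}$, and then to kill those values using the strong Nijenhuis condition. Write $\omega=\mathscr X(\pi)$. Since strong Nijenhuis implies the weak property, Proposition~\ref{prop:xscr1} gives $d_\pi\omega=0$ and $[\omega,\omega]=0$; together with the graded Jacobi identity the latter yields $[\omega,[\omega,z]]=0$ for every $z$. Throughout I will also use that $\mathscr X_{\pi,2}(a,b)$ has degree $|a|+|b|-1$, so it vanishes by dimension whenever $|a|+|b|<0$ (the grading of $\g$ beginning at $-1$). The main tool is the bilinear second-coefficient identity $d_\pi\mathscr X_\pi(x,y)=\mathscr X_\pi([x,y])-[x,\mathscr X_\pi(y)]-(-1)^{|x||y|}[y,\mathscr X_\pi(x)]$ (stated just before \eqref{eq:xder1}) and its three-argument analogue, the $n=2$ case of the twisted-derivation identity for $\mathscr X_\pi$ (the analogue of \eqref{eq:xxxder2}).

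First I would prove the commutation $\mathscr X_{\pi,1}([\omega,z])=[\omega,\mathscr X_{\pi,1}(z)]$ for all $z$, by substituting $x=\omega$ into the bilinear identity: its left-hand side $(d_\pi\mathscr X_\pi)(\omega,z)$ collapses because $\mathscr X_{\pi,2}(\omega,-)=0$ and $d_\pi\omega=0$, while on the right $\mathscr X_{\pi,1}(\omega)=0$ is the weak Nijenhuis identity. Commuting all the $\mathscr X_{\pi,1}$'s past $[\omega,-]$ and using \eqref{eq:xpi2} then gives, for central $f$, the formula $d_\pi\mathscr X_{\pi,1}^k(f)=k[\omega,\mathscr X_{\pi,1}^{k-1}(f)]$ by a one-line induction. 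Next I would establish $\mathscr X_{\pi,2}([\omega,u],[\omega,v])=0$ for $u,v\in\g^{-1}$: feeding $\omega$ into the first slot of the three-argument identity makes its left-hand side vanish (as $\mathscr X_{\pi,3}(\omega,-,-)=0$ and $d_\pi\omega=0$) and drops every right-hand term carrying $\omega$ inside $\mathscr X_{\pi,2}$, leaving a relation between $\mathscr X_{\pi,2}([\omega,a],b)$, $\mathscr X_{\pi,2}([\omega,b],a)$ and $[\omega,\mathscr X_{\pi,2}(a,b)]$; taking $a=u$, $b=[\omega,v]$ kills the last two terms (by $[\omega,[\omega,v]]=0$ and by dimension) and forces the claim.

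With these two facts both identities drop out without further induction. For the first, the bilinear identity — valid for arbitrary $x,y\in\g^{-1}$ because $\mathscr X_{\pi,2}(x,d_\pi y)=0$ by dimension — reads $\mathscr X_{\pi,1}(\{x,y\})=\{\mathscr X_{\pi,1}(x),y\}+\{x,\mathscr X_{\pi,1}(y)\}+[x,[\omega,y]]+\mathscr X_{\pi,2}(d_\pi x,d_\pi y)$, and substituting $d_\pi x=k[\omega,\mathscr X_{\pi,1}^{k-1}(f)]$, $d_\pi y=l[\omega,\mathscr X_{\pi,1}^{l-1}(g)]$ annihilates the last term by the vanishing just proved. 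For the second, I would rewrite $[\omega,\mathscr X_{\pi,1}(y)]=\mathscr X_{\pi,1}([\omega,y])$ by the commutation, set $w=[\omega,y]$, and apply the bilinear identity to the pair $(x,w)$; the two bracket terms reproduce the right-hand side of the desired identity, and the error $(d_\pi\mathscr X_\pi)(x,w)$ splits into $\mathscr X_{\pi,2}(d_\pi x,w)=k\mathscr X_{\pi,2}([\omega,\mathscr X_{\pi,1}^{k-1}(f)],[\omega,y])=0$ and $\mathscr X_{\pi,2}(x,d_\pi w)$, the latter vanishing because $d_\pi w=\pm[\omega,d_\pi y]=\pm l[\omega,[\omega,\mathscr X_{\pi,1}^{l-1}(g)]]=0$.

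The hard part will be bookkeeping rather than ideas: tracking the Koszul signs, and, more delicately, keeping straight which differential — $d_\pi$ or the $\exp(\mathscr X)(\pi)$-twisted differential of the preceding proposition — sits on the outputs in the coefficient relations for $\mathscr X_\pi$. What rescues the argument is that at every place I invoke such a relation the relevant value of $\mathscr X_{\pi,j}$ is already zero (by strong Nijenhuis or by dimension), so the ambiguous output-differential term is applied to $0$ and cannot interfere; this is exactly where the strong condition, and not merely the weak one, is indispensable, since beyond $\mathscr X_{\pi,1}(\omega)=0$ I need $\mathscr X_{\pi,2}(\omega,-)=0$ and $\mathscr X_{\pi,3}(\omega,-,-)=0$.
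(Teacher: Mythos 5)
Your proof is correct and follows essentially the same route as the paper's: both reduce the two identities to the vanishing of $\mathscr X_{\pi,2}$ on arguments of the form $[\mathscr X(\pi),\cdot\,]$, obtained by feeding $\mathscr X(\pi)$ into one slot of the two- and three-argument coefficient identities and then invoking the strong Nijenhuis condition, the dimension restrictions, and the relation $d_\pi\mathscr X_{\pi,1}^k(f)\propto[\mathscr X(\pi),\mathscr X_{\pi,1}^{k-1}(f)]$. The only cosmetic differences are that you derive that last relation by induction from the commutation $\mathscr X_{\pi,1}([\mathscr X(\pi),z])=[\mathscr X(\pi),\mathscr X_{\pi,1}(z)]$ (keeping the factor $k$, which the paper's Lemma \ref{lem:lemsh1} drops but which is harmless since only the vanishing of the bracket term is ever used), whereas the paper extracts it from the intertwining property of $\exp(t\mathscr X_{\pi,1})$.
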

\begin{proof}
First of all, observe that under the conditions we have $\exp(t\mathscr X)(\pi)=\pi+t\mathscr X(\pi)$. Now we have the following lemma
\begin{lemma}\label{lem:lemsh1}
Let $x_k$ be of the form $x_k=\mathscr X_{\pi,1}^k(f),\ d_\pi f=0$, as prescribed in proposition \ref{prop:shiftLinf1}. Then $d_\pi(x_k)=[\mathscr X(\pi),x_{k-1}]$.
\end{lemma}
\begin{proof}
The map $\exp(t\mathscr X_{\pi,1})=\sum_{k\ge0}\frac{t^k}{k!}\mathscr X_{\pi,1}^k$ is the $1$-st ``Taylor coefficient'' of $\exp(t\mathscr X_\pi),\,t\in\R$. Thus, it verifies the condition:
\[
\exp(t\mathscr X_{\pi,1})\circ d_\pi=d_{\exp(t\mathscr X)(\pi)}\circ\exp(t\mathscr X_{\pi,1}).
\] 
This is true for all $t\in\R$, so for every $k=1,2,\dots$ and every $f\in\g$ we have
\[
d_\pi\circ \mathscr X_{\pi,1}^k(f)+(-1)^{|f|}[\mathscr X(\pi),\mathscr X_{\pi,1}^{k-1}(f)]=-\mathscr X_{\pi,1}^k\circ d_\pi(f).
\]
Since $d_\pi f=0$, the statement follows.
\end{proof}
Now, using the equation \eqref{eq:xder1} we see that the first equation from proposition \ref{prop:shiftLinf1} would follow if we prove that $\mathscr X_{\pi,2}(d_\pi x,d_\pi y)=0$ for $x=x_k,\,y=y_l$ and for all $k,l=1,2,\dots$ as in the conditions of proposition \ref{prop:shiftLinf1}. To this end consider $\mathscr X_{\pi,3}(\mathscr X(\pi),x,d_\pi y)$; we may assume that $k,l>0$ since otherwise the statement is trivially true. Due to the dimensional restrictions
\[
\mathscr X_{\pi,3}(\mathscr X(\pi),x,d_\pi y)=0=\mathscr X_{\pi,2}(x,d_\pi y).
\]
On the other hand, since $\mathscr X_\pi$ is a twisted $L_\infty$-derivation and verifies the strong Nijenhuis property, $\mathscr X(\pi)$ is $d_\pi$-closed and we have:
\[
0=(d\mathscr X_{\pi,3})(\mathscr X(\pi),x_{k-1},d_\pi y)=\mathscr X_{\pi,2}([\mathscr X(\pi),x_{k-1}],d_\pi y)=\mathscr X_{\pi,2}(d_\pi x_k,d_\pi y).
\]
We used the statement of lemma \ref{lem:lemsh1} in the last equality.

Similarly, we have $(d\mathscr X_{\pi,2})(x,[\mathscr X(\pi),y])=0$ due to the dimension restrictions, result of lemma \ref{lem:lemsh1} and the equation we just proved; on the other hand
\[
(d\mathscr X_{\pi,2})(x,[\mathscr X(\pi),y])=-[x,\mathscr X_{\pi,1}([\mathscr X(\pi),y])]-[\mathscr X_{\pi,1}(x),[\mathscr X(\pi),y]]+\mathscr X_{\pi,1}([x_k,[\mathscr X(\pi),y]])
\]
Finally, consider $\mathscr X_{\pi,2}(\mathscr X(\pi),y)$: due to the strong Nijenhuis condition $(d\mathscr X_{\pi,2})(\mathscr X(\pi),y)=0$, on the other hand, due to the same condition we have
\[
(d\mathscr X_{\pi,2})(\mathscr X(\pi),y)=\mathscr X_{\pi,1}([\mathscr X(\pi),y])-[\mathscr X(\pi),\mathscr X_{\pi,1}(y)].
\]
Comparing the last two equalities we obtain the second formula from proposition \ref{prop:shiftLinf1}.
\end{proof}
Now the following statement is a direct consequence of proposition \ref{prop:shiftLinf1} and the method we used in the proof of the proposition \ref{prop:ashiftcl}:
\begin{theorem}\label{theo:om}
Let $\mathscr X$ be a strong Nijenhuis derivation of a deformation DG Lie algebra with respect to the Maurer-Cartan element $\pi$. Then for any $f,\,g\in\g^{-1}$, such that $d_\pi f=0=d_\pi g$, the elements $\mathscr X_{\pi,1}^k(f),\,\mathscr X_{\pi,1}^l(g),\,k,l=0,1,2,\dots$ commute with respect to $\pi$:
\[
\{\mathscr X_{\pi,1}^k(f),\mathscr X_{\pi,1}^l(g)\}=[\mathscr X_{\pi,1}^k(f),[\pi,\mathscr X_{\pi,1}^l(g)]]=0.
\] 
\end{theorem}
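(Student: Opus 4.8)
The plan is to mirror, step for step, the inductive computation in the proof of Proposition~\ref{prop:ashiftcl}, using Proposition~\ref{prop:shiftLinf1} as the exact analogue of the two differentiated ``series of equations'' that drive the classical argument. Write $x_k=\mathscr X_{\pi,1}^k(f)$, $y_l=\mathscr X_{\pi,1}^l(g)$ and set
\[
P_{k,l}=\{x_k,y_l\}=[x_k,[\pi,y_l]],\qquad Q_{k,l}=[x_k,[\mathscr X(\pi),y_l]].
\]
Since $x_{k+1}=\mathscr X_{\pi,1}(x_k)$ and $y_{l+1}=\mathscr X_{\pi,1}(y_l)$, the two identities of Proposition~\ref{prop:shiftLinf1} read exactly
\[
\mathscr X_{\pi,1}(P_{k,l})=P_{k+1,l}+P_{k,l+1}+Q_{k,l},\qquad \mathscr X_{\pi,1}(Q_{k,l})=Q_{k+1,l}+Q_{k,l+1}.
\]
These are formally identical to the classical relations, with $\mathscr X_{\pi,1}$ in the role of $\lc_\xi$, the $\pi$-bracket in the role of the Poisson bracket, and $\mathscr X(\pi)$ in the role of $\lc_\xi\pi$; note that the strong Nijenhuis condition is what already eliminated the ``order $\ge 2$'' terms when Proposition~\ref{prop:shiftLinf1} was proved, so no separate cut-off (the analogue of $\lc_\xi^2\pi=0$) is needed here.

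Next I would record the boundary data. Because $\mathscr X_{\pi,1}$ preserves $\g^{-1}$, each $x_k,y_l$ lies in $\g^{-1}$, and since $f,g$ are $\pi$-central the skew-symmetry of the $\pi$-bracket gives $\{f,h\}=-\{h,f\}=-[h,d_\pi f]=0$ and likewise $\{h,g\}=0$ for every $h\in\g^{-1}$. Hence $P_{0,l}=\{f,y_l\}=0$ and $P_{k,0}=\{x_k,g\}=0$ for all $k,l$, the exact analogue of the vanishing of the classical brackets along the two boundary rows. The remaining base datum $Q_{0,0}=[f,[\mathscr X(\pi),g]]=0$ is the weak Nijenhuis relation that follows from~\eqref{eq:xder1} once $d_\pi f=d_\pi g=0$, exactly as in the previous subsection.

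With these two recursions and these boundary values in hand, the argument is now literally the one in Proposition~\ref{prop:ashiftcl}. I would induct on $N$, assuming $P_{k,l}=0$ for $k+l<N$ and $Q_{k,l}=0$ for $k+l<N-1$; applying the linear operator $\mathscr X_{\pi,1}$ to these vanishing quantities (it kills $0$) produces precisely the two linear systems relating the order-$N$ quantities. One then solves them as before: the second recursion identifies all $Q_{N-1-j,j}$ up to sign with $Q_{N-1,0}$, the first recursion together with $P_{k,0}=0$ expresses each $P_{N-j,j}$ as an integer multiple of $Q_{N-1,0}$, and finally $P_{0,N}=0$ forces $Q_{N-1,0}=0$, whence all order-$N$ terms vanish. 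This yields $\{\mathscr X_{\pi,1}^k(f),\mathscr X_{\pi,1}^l(g)\}=[\mathscr X_{\pi,1}^k(f),[\pi,\mathscr X_{\pi,1}^l(g)]]=0$ for all $k,l$, as claimed.

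Since all the conceptual weight sits in Proposition~\ref{prop:shiftLinf1}, the only points demanding care are bookkeeping rather than ideas. The first is checking that $\mathscr X_{\pi,1}$ indeed maps $\g^{-1}$ into $\g^{-1}$, so that the bracket expressions stay in the correct degree and $\pi$-centrality of $f,g$ genuinely propagates to the whole boundary; this is where the ``dimensional restrictions'' invoked in Proposition~\ref{prop:shiftLinf1} must be used consistently. The second, and the main place an error could creep in, is the index-and-sign bookkeeping in the linear solving step — the same mildly delicate combinatorics as in the classical proof — but it introduces no new phenomena beyond what Proposition~\ref{prop:ashiftcl} already handles.
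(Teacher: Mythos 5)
Your proposal is correct and follows exactly the route the paper intends: the paper states Theorem \ref{theo:om} as "a direct consequence of Proposition \ref{prop:shiftLinf1} and the method used in the proof of Proposition \ref{prop:ashiftcl}", and your write-up simply makes that explicit, with the two identities of Proposition \ref{prop:shiftLinf1} playing the role of the two differentiated series of equations, the boundary data $P_{k,0}=P_{0,l}=0$ supplied by $\pi$-centrality, and the same linear elimination forcing $Q_{N-1,0}=0$ and hence all order-$N$ terms to vanish.
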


\section{Conclusions and remarks}
As we have just seen, the (strong) Nijenhuis property of an $L_\infty$ derivation allows one to reproduce in a word for word manner the proof of the proposition \ref{prop:ashiftcl}, thus yielding the commutative algebras in the deformation quantization of Poisson manifolds (in case the quantization is done according to the Kontsevich's recipe). Let us now briefly discuss the possible directions of future research in relation with this our construction and the other topics, arising from it.

The first and most acute problem of our result is that so far we know of no other examples of $L_\infty$-derivations, verifying the (strong) Nijenhuis property except for the classical ones, i.e. those which arise in the study of usual Poisson algebras, for instance the linear Poisson structures on Euclidean spaces (in particular on the dual spaces of Lie algebras). Finding such a nontrivial example (or disproving its existence) is the most important question to be addressed in any paper, dedicated to the elaboration of the methods, considered above. 

Next, the manner of our proof is not the most economical one. In fact, we just showed, that the map $\mathscr X_{\pi,1}$ in this case verifies the equations, similar to a Nijenhuis vector field $\xi$, see proposition \ref{prop:shiftLinf1}. Moreover, as we saw in section 3, one can get some intermediate result with a much a much less restrictive assumptions. Thus the question, which should also be addressed in a future investigation is whether the strong Nijenhuis property can in some way be relaxed. Some evident improvements in this side can be made right now; for instance, since our proof of proposition \ref{prop:shiftLinf1} only involved manipulations with the maps $\mathscr X_{k,\pi}$ for $k=1,2,3$, we could have freely removed the conditions, involving all other maps $\mathscr X_{k,\pi},\,k\ge4$ from our considerations. However, this is too small an improvement to make this point at present. And of course, this question is closely related with the previous one: one might suppose that relaxing the strong Nijenuis property would make the quest for the corresponding examples easier.

Another consideration, which can be helpful in the search of applications of this construction, is that in the case of semisimple Lie algebras, the quantum counterparts of the commutative subalgebras rendered by the argument shift method, are known; we imply at the results of Tarasov, Rybnikov, Molev and others (see \cite{Tarasov,Rybnikov,Molev2}). These algebras are certain commutative subalgebras inside the universal enveloping algebra, which coincide with the argument shift results modulo the terms, linear in deformation parameter; however, the methods in which they are obtained, are totally different from each other and from anything, resembling the argument shifting. Thus, one of the first questions, one could ask about these algebras, is whether there is any shifting construction, that would underlie these results. Another interesting observation is that all these results are about the semisimple Lie algebras, whereas the usual method is applicable to any Lie algebra, and even to any Poisson structure with any Nijenhuis field associated to it. The role of the semisimplicity assumption is very far from being clearly understood, as well as the degree to which it can be dispensed of.

Last, but not least, is the question about the homological meaning of the Nijenhuis property: as one knows, the major step towards the construction of Kontsevich's quantization is the observation, that solutions of Maurer-Cartan equations can be ``moved around'' by $L_\infty$-morphisms. Now,  consider the pair $(\pi,\xi)$, where $\pi$ is a solution of the Maurer-Cartan equation in a DG Lie algebra \g\ and $\xi$ is a Nijenhuis vector field for $\pi$ (regarded as a derivation of \g\ or more generally as a linear operator on \g). Let $F=\{F_n\}:\g\to\mathfrak h$ be an $L_\infty$-morphism of DG Lie algebras. What kind of structure can we induce on $\mathfrak h$ from $(\pi,\xi)$ with the help of $F$? In particular, if $F$ is an $L_\infty$-quasi-isomorphism, then can one use $F$ to obtain a Maurer-Cartan element $F(\pi)$ in $\mathfrak h$ with an $L_\infty$-derivation, verifying the weak or strong Nijenhuis condition, associated with $F(\pi)$?

\medskip


\begin{thebibliography}{99}
\bibitem{Manakov1976} S.V. Manakov. \textit{Note on the integration of Euler’s equations of the dynamics of an $n$-dimensional rigid body.} Functional Analysis and Its Applications, {\bf10}:328-329, 1976.
\bibitem{MiFo78} A.S. Mishchenko and A.T. Fomenko. \textit{Euler equations on finite-dimensional Lie groups}. Mathematics of the USSR-Izvestiya, {\bf 12}(2):371-389, 1978.
\bibitem{Bol1} A.V. Bolsinov. \textit{Compatible Poisson brackets on Lie algebras and the completeness of families of functions in involution.} Mathematics of the USSR-Izvestiya, {\bf 38}(1):69-90, 1992.
\bibitem{Bol2} A.V. Bolsinov, K.M. Zuev. \textit{A formal Frobenius theorem and argument shift.} Mathematical Notes, {\bf86}(1-2):10-18, 2009.
\bibitem{Sad} S.T. Sadetov. \textit{A proof of the Mishchenko-Fomenko conjecture.} Doklady Math., {\bf70}(1):634-638, 2004.
\bibitem{BolZh} A. Bolsinov, P. Zhang. \textit{Jordan-Kronecker invariants of finite-dimensional Lie algebras.} arXiv:1211.0579, 2012.
\bibitem{Izo} A. Izosimov. \textit{Generalized argument shift method and complete commutative subalgebras in polynomial Poisson algebras.} arXiv:1406.3777, 2014.
\bibitem{Kon97} M. Kontsevich. \textit{Deformation quantization of Poisson manifolds, I.} Lett. Math.Phys., {\bf66}(3):157-216, 2003.
\bibitem{FeiginFrenkel} B. Feigin, E. Frenkel. \textit{Affine Kac-Moody algebras at the critical level and Gelfand-Dikii algebras.} Int. Jour. Mod. Phys., {\bf A7} Supplement 1A:197-215, 1992.
\bibitem{Molev1} A. I. Molev. \textit{Yangians and their applications.} in: Handbook of algebra, vol. {\bf3}, North-Holland, Amsterdam, 2003, 907-959.
\bibitem{Molev3} A. Futorny, A. Molev. \textit{Quantization of the shift of argument subalgebras in type A.} Adv. Math. {\bf 285}:1358-1375, 2015.
\bibitem{Molev2} A. Molev, O. Yakimova. \textit{Quantisation and nilpotent limits of Mishchenko-Fomenko subalgebras.} Represent. Theory {\bf 23}:350-378, 2019.
\bibitem{Tarasov} A.A. Tarasov. \textit{On some commutative subalgebras of the universal enveloping algebra of the Lie algebra $\mathfrak{gl}(n,\mathbb C)$.} Math. Sbornik {\bf191}(9):115-122, 2000
\bibitem{Rybnikov} L.G. Rybnikov. \textit{The Argument Shift Method and the Gaudin Model.} Funktsional. Anal. i Prilozhen. {\bf 40}(3):30-43, 2006
\bibitem{Talalaev} D. Talalaev. \textit{Quantization of the Gaudin System.} arXiv:hep-th/0404153, 2004.
\bibitem{Ciccoli} N. Ciccoli, P. Witkowski. \textit{From Poisson to Quantum Geometry.} Warsaw, 2006
\end{thebibliography}
\end{document}